\numberwithin{equation}{section}
\titlespacing*{\section}{0pt}{1.2ex plus .4ex}{0.8ex}
\titlespacing*{\subsection}{0pt}{1.0ex plus .3ex}{0.6ex}
\titlespacing*{\subsubsection}{0pt}{0.8ex plus .2ex}{0.5ex}
\setlist[itemize]{leftmargin=1.4em}
\newtheorem{theorem}{Theorem}[section]
\newtheorem{lemma}{Lemma}[section]
\newtheorem{assumption}{Assumption}[section]
\newtheorem{definition}{Definition}[section]
\newtheorem{remark}{Remark}[section]
\newcommand{\R}{\mathbb{R}}
\newcommand{\N}{\mathbb{N}}
\newcommand{\abs}[1]{\left\lvert #1\right\rvert}
\newcommand{\sign}{\mathrm{sign}}
\newcommand{\one}{\mathbf{1}}
\title{\Large Latent-Space Mean-Field Theory for Deep BitNet-like Training:\\
Constrained Gradient Flows with Smooth Quantization and STE Limits}
\author{Dongwon Kim$^{1}$ \and Dongseok Lee$^{2}$}
\date{\small 
$^{1}$SAKAK Inc., Seoul, South Korea, \texttt{kdwaha@sakak.co.kr}\\
$^{2}$Department of Mathematical Sciences, Korea Advanced Institute of Science and Technology (KAIST), Daejeon, Republic of Korea, \texttt{lorafa@kaist.ac.kr}
}
\begin{document}

\maketitle

\begin{abstract}
This work develops a mean-field analysis for the asymptotic behavior of deep BitNet-like architectures as smooth quantization parameters approach zero. We establish that empirical measures of latent weights converge weakly to solutions of constrained continuity equations under vanishing quantization smoothing. Our main theoretical contribution demonstrates that the natural exponential decay in smooth quantization cancels out apparent singularities, yielding uniform bounds on mean-field dynamics independent of smoothing parameters. Under standard regularity assumptions, we prove convergence to a well-defined limit that provides the mathematical foundation for gradient-based training of quantized neural networks through distributional analysis.
\end{abstract}

\section{Introduction}

The training dynamics of quantized neural networks pose fundamental theoretical challenges due to the non-differentiable nature of quantization operators. BitNet-like architectures \cite{wang2023bitnet} employ discrete sign and clipping functions in forward propagation while maintaining continuous latent weights for gradient-based optimization. This creates a mathematical tension between the discrete forward pass and smooth optimization requirements.

Mean-field theory \cite{nguyen2019mean,chizat2018global} offers a powerful framework for analyzing neural network training by treating parameters as interacting particles and studying their empirical measure evolution. However, extending this theory to quantized networks is non-trivial because quantization operators violate smoothness assumptions required for standard mean-field analysis \cite{bengio2013estimating}.

This paper addresses this challenge by analyzing the limiting behavior of smooth quantization approximations as smoothing parameters vanish. Our key insight is that the exponential decay inherent in smooth approximations exactly compensates for apparent singularities, enabling rigorous mean-field analysis. We prove that these dynamics converge to a well-defined limit governed by constrained transport equations \cite{ambrosio2008gradient}, providing the first mathematical justification for quantized network training through distributional gradient analysis.

The mathematical framework developed in this work exhibits structural parallels with key concepts in high energy physics theory. The mean‐field limit of neural network training dynamics resembles the holographic principle in AdS/CFT correspondence, where bulk gravitational dynamics relate to boundary conformal field theory.

\subsection{Related work}

\textbf{Mean-field theory for neural networks.} The mean-field analysis of neural network training was developed by \cite{nguyen2019mean,chizat2018global}, with extensions to deep networks \cite{sirignano2020mean,lu2020meanfield}. The connection to optimal transport was established through Wasserstein gradient flows \cite{ambrosio2008gradient,santambrogio2015optimal}.

\textbf{Quantized neural networks.} Binary neural networks were introduced by \cite{courbariaux2015binaryconnect,rastegari2016xnor,hubara2016binarized}, with the straight-through estimator formalized by \cite{bengio2013estimating,yin2019understanding}. Recent advances include BitNet-like architectures \cite{wang2023bitnet} and comprehensive surveys. Theoretical analysis includes approximation theory \cite{DBLP:journals/corr/LiL16,xu2018training} and generalization bounds \cite{zhang2018lq}.

\section{Modeling BitNet-like architectures with Smooth Quantization}

\subsection{Notation and constraint structure}

Fix depth $L\in\N$. For layer $\ell\in\{1,\dots,L\}$ with width $n_\ell$ and fan-in $m_\ell$, the latent weight matrix is $W^{(\ell)}\in\R^{n_\ell\times m_\ell}$.

For a matrix $A \in \mathbb{R}^{m \times n}$, we denote:
\begin{itemize}
\item $\|A\|_F := \sqrt{\sum_{i=1}^m \sum_{j=1}^n |A_{ij}|^2}$ the Frobenius norm,
\item $\|A\|_\infty := \max_{i,j} |A_{ij}|$ the max norm,
\item $\langle A, B \rangle := \sum_{i,j} A_{ij} B_{ij}$ the Frobenius inner product.
\end{itemize}

Define the layer mean
\begin{equation}\label{eq:alpha-def}
\alpha^{(\ell)}(W^{(\ell)}) \equiv \Psi^{(\ell)}(W^{(\ell)}) := \frac{1}{n_\ell m_\ell}\sum_{i=1}^{n_\ell}\sum_{j=1}^{m_\ell} W^{(\ell)}_{ij},
\end{equation}
the projection onto the zero-mean subspace
\begin{equation}\label{eq:proj-def}
P^{(\ell)}(W^{(\ell)}) := W^{(\ell)} - \alpha^{(\ell)}(W^{(\ell)})\,\one_{n_\ell\times m_\ell},
\end{equation}
and the constraint sets $\mathcal{H}^{(\ell)}_c:=\{W:\Psi^{(\ell)}(W)=c\}$, $\mathcal{H}^{(\ell)}_0=\ker(\Psi^{(\ell)})$. The following properties are standard and used repeatedly.

\begin{lemma}[Orthogonal decomposition and isometries]\label{lem:proj}
For each layer $\ell$ and $W\in\R^{n_\ell\times m_\ell}$: (i) $W=\alpha^{(\ell)}(W)\,\one+P^{(\ell)}(W)$ with $\langle \one,P^{(\ell)}(W)\rangle=0$; (ii) $P^{(\ell)}$ is linear, self-adjoint, idempotent; (iii) $\|W\|_F^2=n_\ell m_\ell\,\abs{\alpha^{(\ell)}(W)}^2+\|P^{(\ell)}(W)\|_F^2$; (iv) $T_c(W)=W+c\,\one$ is an isometry $\mathcal{H}^{(\ell)}_0\to\mathcal{H}^{(\ell)}_c$.
\end{lemma}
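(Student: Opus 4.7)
The plan is to treat $\Psi^{(\ell)}$ as a single linear functional, namely $\Psi^{(\ell)}(W) = \langle \one, W\rangle/(n_\ell m_\ell)$, and $P^{(\ell)}$ as the associated orthogonal projection onto $\ker(\Psi^{(\ell)})$. Once this identification is made, all four statements reduce to standard facts about orthogonal projections and affine translations in the Euclidean space $\R^{n_\ell\times m_\ell}$ equipped with the Frobenius inner product, so I would verify them in order by direct linear-algebra computation.

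For (i), the decomposition $W = \alpha^{(\ell)}(W)\,\one + P^{(\ell)}(W)$ is just a rearrangement of the definition of $P^{(\ell)}$, so only the orthogonality needs checking. I would compute $\langle \one, P^{(\ell)}(W)\rangle = \langle \one, W\rangle - \alpha^{(\ell)}(W)\,\langle \one, \one\rangle$, and observe that both terms equal $n_\ell m_\ell\,\alpha^{(\ell)}(W)$, since $\|\one\|_F^2 = n_\ell m_\ell$. Part (ii) then follows quickly: linearity of $P^{(\ell)}$ is inherited from linearity of $\alpha^{(\ell)}$; self-adjointness reduces to the symmetry of the bilinear form $(A,B)\mapsto \alpha^{(\ell)}(A)\,\alpha^{(\ell)}(B)$ after expanding $\langle P^{(\ell)}(A), B\rangle$ and $\langle A, P^{(\ell)}(B)\rangle$ and using the formula for $\alpha^{(\ell)}$ in terms of $\langle \one, \cdot\rangle$; idempotence follows from the one-line identity $\alpha^{(\ell)} \circ P^{(\ell)} = 0$.

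Part (iii) is the Pythagorean theorem applied to the orthogonal decomposition of (i), using $\|\alpha^{(\ell)}(W)\,\one\|_F^2 = n_\ell m_\ell\,|\alpha^{(\ell)}(W)|^2$. For (iv), I would note that $T_c$ is a translation, hence an isometry of the ambient space; to show it restricts to a bijection $\mathcal{H}^{(\ell)}_0 \to \mathcal{H}^{(\ell)}_c$, I would use the affine behavior $\Psi^{(\ell)}(W + c\,\one) = \Psi^{(\ell)}(W) + c$ to confirm both the image inclusion and the existence of the inverse $T_{-c}$.

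There is no real obstacle here; the statement is essentially a packaging of standard Hilbert-space facts about orthogonal complements and affine translations between parallel hyperplanes, and the only care needed is to make every normalization explicit, since $\one$ is not a unit vector (its Frobenius norm squared is $n_\ell m_\ell$), so the factor $1/(n_\ell m_\ell)$ in the definition of $\alpha^{(\ell)}$ must be carried through each identity consistently.
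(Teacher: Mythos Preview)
Your proposal is correct and follows essentially the same route as the paper's proof: both verify (i)--(iv) by direct computation using $\alpha^{(\ell)}(W) = \langle \one, W\rangle/(n_\ell m_\ell)$ and $\|\one\|_F^2 = n_\ell m_\ell$, reducing self-adjointness to the symmetry of $(A,B)\mapsto \alpha^{(\ell)}(A)\alpha^{(\ell)}(B)$, idempotence to $\alpha^{(\ell)}\circ P^{(\ell)} = 0$, and (iii) to Pythagoras. Your framing of $P^{(\ell)}$ as the orthogonal projection onto $\ker(\Psi^{(\ell)})$ and your explicit mention of the inverse $T_{-c}$ in (iv) are minor expository additions, not a different argument.
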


\begin{proof}
Fix $W\in\R^{n_\ell\times m_\ell}$.

\textbf{Part (i):} By definitions \eqref{eq:alpha-def} and \eqref{eq:proj-def}, we have
\begin{align}
\alpha^{(\ell)}(W)\,\one + P^{(\ell)}(W) &= \alpha^{(\ell)}(W)\,\one + W - \alpha^{(\ell)}(W)\,\one = W.
\end{align}

For orthogonality, compute
\begin{align}
\langle \one, P^{(\ell)}(W)\rangle &= \langle \one, W - \alpha^{(\ell)}(W)\one\rangle \\
&= \langle \one, W\rangle - \alpha^{(\ell)}(W)\langle \one,\one\rangle \\
&= n_\ell m_\ell \alpha^{(\ell)}(W) - \alpha^{(\ell)}(W)(n_\ell m_\ell) = 0,
\end{align}
where we used the fact that $\langle \one, \one \rangle = n_\ell m_\ell$ and 
$\alpha^{(\ell)}(A) = \frac{1}{n_\ell m_\ell}\langle A, \one \rangle$ by definition.

\textbf{Part (ii):} Linearity of $P^{(\ell)}$ follows immediately from linearity of matrix operations and the scalar $\alpha^{(\ell)}(\cdot)$. For self-adjointness, let $A,B\in\R^{n_\ell\times m_\ell}$:
\begin{align}
\langle A, P^{(\ell)}(B)\rangle &= \langle A, B - \alpha^{(\ell)}(B)\one\rangle \\
&= \langle A, B\rangle - \alpha^{(\ell)}(B)\langle A,\one\rangle \\
&= \langle A, B\rangle - \alpha^{(\ell)}(B) \cdot n_\ell m_\ell \alpha^{(\ell)}(A) \\
&= \langle A, B\rangle - \alpha^{(\ell)}(A) \cdot n_\ell m_\ell \alpha^{(\ell)}(B) \\
&= \langle A - \alpha^{(\ell)}(A)\one, B\rangle = \langle P^{(\ell)}(A), B\rangle.
\end{align}

For idempotence, note that for any matrix $W$,
\begin{align}
\alpha^{(\ell)}(P^{(\ell)}(W)) &= \alpha^{(\ell)}(W - \alpha^{(\ell)}(W)\one) \\
&= \alpha^{(\ell)}(W) - \alpha^{(\ell)}(W) \cdot \alpha^{(\ell)}(\one) \\
&= \alpha^{(\ell)}(W) - \alpha^{(\ell)}(W) \cdot 1 = 0.
\end{align}
Therefore, $P^{(\ell)}(P^{(\ell)}(W)) = P^{(\ell)}(W) - \alpha^{(\ell)}(P^{(\ell)}(W))\one = P^{(\ell)}(W) - 0 = P^{(\ell)}(W)$.

\textbf{Part (iii):} By the orthogonality established in part (i), we have
\begin{align}
\|W\|_F^2 &= \|\alpha^{(\ell)}(W)\one + P^{(\ell)}(W)\|_F^2 \\
&= \|\alpha^{(\ell)}(W)\one\|_F^2 + \|P^{(\ell)}(W)\|_F^2 + 2\langle \alpha^{(\ell)}(W)\one, P^{(\ell)}(W)\rangle \\
&= |\alpha^{(\ell)}(W)|^2 \|\one\|_F^2 + \|P^{(\ell)}(W)\|_F^2 + 0 \\
&= n_\ell m_\ell |\alpha^{(\ell)}(W)|^2 + \|P^{(\ell)}(W)\|_F^2.
\end{align}

\textbf{Part (iv):} For $A,B \in \mathcal{H}^{(\ell)}_0$, we have $\alpha^{(\ell)}(A) = \alpha^{(\ell)}(B) = 0$. Then
\begin{align}
\|T_c(A) - T_c(B)\|_F &= \|(A + c\one) - (B + c\one)\|_F = \|A - B\|_F,
\end{align}
proving that $T_c$ is an isometry. Since $T_c(A) = A + c\one$ and $\alpha^{(\ell)}(A) = 0$, we have $\alpha^{(\ell)}(T_c(A)) = c$, so $T_c(A) \in \mathcal{H}^{(\ell)}_c$. This establishes the mapping $\mathcal{H}^{(\ell)}_0 \to \mathcal{H}^{(\ell)}_c$.
\end{proof}

\subsection{Smooth quantization and dequantization}

Quantized weights in BitNet-like architectures are the signs of centered latent weights, with a scaling to preserve variance \cite{wang2023bitnet}. To make the forward map differentiable, we adopt smooth surrogates.

\begin{definition}[Smooth sign, clip, and absolute value]
For $\varepsilon\in(0,1]$, define
\begin{align*}
\mathrm{sgn}_\varepsilon(z) &:= \tanh(z/\varepsilon), \quad \text{so } \abs{\mathrm{sgn}_\varepsilon'(z)}\le \varepsilon^{-1},\\
|\cdot|_\varepsilon(z) &:= \sqrt{z^2+\varepsilon^2}, \quad \nabla |\cdot|_\varepsilon(z) = \frac{z}{\sqrt{z^2+\varepsilon^2}},\\
\mathrm{clip}_\varepsilon(x;a,b) &:= a+(b-a)\,\sigma\!\left(\frac{x-a}{\varepsilon}\right),\quad \sigma(u)=\frac{1}{1+e^{-u}}.
\end{align*}
Then $\mathrm{clip}_\varepsilon$ is $C^\infty$ and $1$-Lipschitz uniformly in $\varepsilon$.
\end{definition}

\begin{definition}[Smoothed BitLinear layer]
Let $X\in\R^{m_\ell}$ be an input. The smoothed quantized weight is
\[
\widetilde{W}^{(\ell)}_\varepsilon := \mathrm{sgn}_\varepsilon\big(P^{(\ell)}(W^{(\ell)})\big)\in[-1,1]^{n_\ell\times m_\ell}.
\]

Define a smooth $L^1$-scale
\[
\beta^{(\ell)}_\varepsilon(W^{(\ell)}):=\frac{1}{n_\ell m_\ell}\sum_{i,j} \abs{P^{(\ell)}(W^{(\ell)})_{ij}}_\varepsilon,
\]
and a smooth absmax-like activation quantizer
\[
\mathrm{Quant}^{(b)}_\varepsilon(x):=\mathrm{clip}_\varepsilon\!\left(\frac{x}{\gamma_\varepsilon(x)}\cdot Q_b,\ -Q_b+\delta,\ Q_b-\delta\right),\quad \gamma_\varepsilon(x):=\max\{\varepsilon,\|x\|_\infty\},
\]
with fixed $b\in\N$, $Q_b=2^{b-1}$ and small $\delta\in(0,1)$. The layer map is
\[
h^{(\ell)}(x)=\sigma^{(\ell)}\!\Big(\beta^{(\ell)}_\varepsilon(W^{(\ell)})\,\widetilde{W}^{(\ell)}_\varepsilon \,\mathrm{Quant}^{(b)}_\varepsilon(x)\Big).
\]
\end{definition}

\begin{remark}[On STE consistency]
The straight-through estimator (STE) is typically implemented by replacing $\partial \sign$ with an identity or bounded truncation in a margin \cite{wang2023bitnet}. Our $\mathrm{sgn}_\varepsilon$ provides a differentiable surrogate with uniformly bounded derivatives on compacta, making chain-rule gradients well-defined. In Section \ref{sec:ste-limit} we discuss stability of the mean-field limit as $\varepsilon\downarrow0$.
\end{remark}

\subsection{Network, loss, and dynamics}

\begin{definition}[Layer dimensions and network architecture]\label{def:architecture}
Each layer $\ell \in \{1,\ldots,L\}$ defines a map $h^{(\ell)}: \mathbb{R}^{m_\ell} \to \mathbb{R}^{n_\ell}$ where:
\begin{itemize}
\item $m_\ell$ is the input dimension (fan-in) of layer $\ell$
\item $n_\ell$ is the output dimension (width) of layer $\ell$
\item For consistency: $n_{\ell-1} = m_\ell$ for $\ell \geq 2$
\end{itemize}
For regression tasks, we assume $n_L = 1$ so that $f_W(x) \in \mathbb{R}$ is scalar-valued.
For multi-class classification with $K$ classes, $n_L = K$ and $f_W(x) \in \mathbb{R}^K$.
\end{definition}

The $L$-layer forward recursion is defined as:
\begin{align}
h^{(0)}(x) &= x \in \mathbb{R}^{d}, \label{eq:layer0}\\
h^{(\ell)}(x) &= \sigma^{(\ell)}\!\left(\beta^{(\ell)}_\varepsilon(W^{(\ell)})\,\widetilde{W}^{(\ell)}_\varepsilon \,\mathrm{Quant}^{(b)}_\varepsilon\big(h^{(\ell-1)}(x)\big)\right) \in \mathbb{R}^{n_\ell}, \label{eq:layer-ell}\\
f_W(x) &:= h^{(L)}(x) \in \mathbb{R}^{n_L}. \label{eq:network-output}
\end{align}

Here, $\sigma^{(\ell)}: \mathbb{R}^{n_\ell} \to \mathbb{R}^{n_\ell}$ denotes component-wise application of the activation function.

With smooth activations $\sigma^{(\ell)}$ (Assumption \ref{ass:reg}), for a data law $\pi$ on $\mathbb{R}^{d}\times\mathbb{R}^{n_L}$ and a $C^2$ loss $\ell: \mathbb{R}^{n_L} \times \mathbb{R}^{n_L} \to \mathbb{R}$, define the population risk
\[
\mathcal{R}_\varepsilon(W^{(1)},\dots,W^{(L)}):=\mathbb{E}_{(X,Y)\sim\pi}\big[\ell\big(f_W(X),Y\big)\big].
\]

We study discrete-time gradient descent
\begin{equation}\label{eq:gd}
W^{(\ell)}(k+1)=W^{(\ell)}(k)-\eta\,\nabla_{W^{(\ell)}}\mathcal{R}_\varepsilon\big(W^{(1)}(k),\dots,W^{(L)}(k)\big),
\end{equation}
with time-interpolation $t=k\eta$ and continuous-time limit $\eta\downarrow0$.

\section{Assumptions and basic estimates}

\begin{assumption}[Regularity and boundedness]\label{ass:reg}
Fix $T>0$ and constants $R,A_\ell,C_\ell,D_\ell,L_1,L_2,M>0$.
\begin{itemize}[leftmargin=1.4em]
\item[(R1)] Data: $\pi$ has compact support; $\|X\|_\infty\le R$, $\|Y\|_\infty\le R$ a.s.
\item[(R2)] Loss: $\ell\in C^2(\mathbb{R}^{n_L}\times\mathbb{R}^{n_L})$ with $\|\nabla^2\ell\|_{\text{op}}\le L_2$ and $\|\nabla_1\ell(u,y)\|_2\le L_1(1+\|u\|_2)$.
\item[(R3)] Activations: $\sigma^{(\ell)}\in C^2(\mathbb{R})$ with $\|(\sigma^{(\ell)})'\|_\infty\le C_\ell$, $\|(\sigma^{(\ell)})''\|_\infty\le D_\ell$, and $|\sigma^{(\ell)}(z)|\le A_\ell(1+|z|)$.
\item[(R4)] Initialization and boundedness: $\sup_{n,\ell,i,j}\abs{W^{(\ell)}_{ij}(0)}\le M$, and there exists $M_\star = M_\star(T, L_1, L_2, \{C_\ell, D_\ell\}_\ell, M) < \infty$ such that all iterates satisfy $\|W^{(\ell)}(t)\|_\infty \leq M_\star$ for $t \in [0,T]$ through projection $\Pi_{\mathcal{B}_{M_\star}}$ where $\Pi_{\mathcal{B}_{M_\star}}(W)_{ij} = \mathrm{clip}(W_{ij}; -M_\star, M_\star)$.
\item[(R5)] Smoothing parameters: Fix $\varepsilon\in(0,1]$, $b\in\mathbb{N}$, $\delta\in(0,1)$ throughout the analysis of the mean-field limit.
\end{itemize}
\end{assumption}

\begin{lemma}[Lipschitzness of the forward map]\label{lem:lipschitz-forward}
Under Assumption \ref{ass:reg}, for fixed parameters $\varepsilon\in(0,1]$, $b\in\mathbb{N}$, $\delta\in(0,1)$, there exists a constant $L_{\mathrm{fwd}} = L_{\mathrm{fwd}}(\varepsilon,b,\delta,\{A_\ell,C_\ell,D_\ell\}_{\ell=1}^L, M_\star, R) < \infty$ such that for all $x$ in the support of $\pi$ and all weight configurations $W, \widehat{W}$ in the compact domain $K$,
\[
\|f_W(x) - f_{\widehat{W}}(x)\|_2 \leq L_{\mathrm{fwd}} \sum_{\ell=1}^L \|W^{(\ell)} - \widehat{W}^{(\ell)}\|_F.
\]
Moreover, $W \mapsto \mathcal{R}_\varepsilon(W)$ is $C^1$ with $\nabla \mathcal{R}_\varepsilon$ locally Lipschitz on $K$.
\end{lemma}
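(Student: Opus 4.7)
The plan is to prove the Lipschitz bound by induction on the layer index $\ell$ and then upgrade to $C^1$ regularity and locally Lipschitz gradients by bootstrapping the same argument one order higher. The forward recursion \eqref{eq:layer-ell} is a composition of five primitive operations — the zero-mean projection $P^{(\ell)}$, the smooth sign $\mathrm{sgn}_\varepsilon$, the smooth absolute value $|\cdot|_\varepsilon$ (entering through $\beta^{(\ell)}_\varepsilon$), the activation quantizer $\mathrm{Quant}^{(b)}_\varepsilon$, and the nonlinearity $\sigma^{(\ell)}$ — each of which is globally Lipschitz on the compact domain $K$ with an explicit constant depending on $\varepsilon$, $b$, $\delta$ and the constants from Assumption \ref{ass:reg}.

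First I would collect uniform a priori bounds: by (R4), $\|W^{(\ell)}\|_\infty \le M_\star$; Lemma \ref{lem:proj} then gives $\|P^{(\ell)}(W^{(\ell)})\|_\infty \le 2M_\star$, hence $\|\widetilde W^{(\ell)}_\varepsilon\|_\infty \le 1$ and $\beta^{(\ell)}_\varepsilon(W^{(\ell)}) \le 2M_\star + \varepsilon$, and combining with $\|\mathrm{Quant}^{(b)}_\varepsilon(\cdot)\|_\infty \le Q_b$ and the linear-growth bound in (R3) yields $\|h^{(\ell)}(x)\|_\infty \le B_\ell$ uniformly for $x \in \mathrm{supp}(\pi)$. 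I would then record the Lipschitz constant of each primitive on the relevant region: $\mathrm{sgn}_\varepsilon$ is $\varepsilon^{-1}$-Lipschitz; $|\cdot|_\varepsilon$ and $\mathrm{clip}_\varepsilon$ are $1$-Lipschitz; $P^{(\ell)}$ is $1$-Lipschitz (orthogonal projection); $\sigma^{(\ell)}$ is $C_\ell$-Lipschitz; and using $\gamma_\varepsilon \ge \varepsilon$ together with the $1$-Lipschitz property of $\max$, a direct computation shows $\mathrm{Quant}^{(b)}_\varepsilon$ is Lipschitz with constant at most $\lesssim Q_b/\varepsilon$ on $\{\|x\|_\infty \le B_{\ell-1}\}$. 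Consequently $W^{(\ell)} \mapsto \widetilde W^{(\ell)}_\varepsilon$ is $\varepsilon^{-1}$-Lipschitz in Frobenius norm and $W^{(\ell)} \mapsto \beta^{(\ell)}_\varepsilon(W^{(\ell)})$ is $(n_\ell m_\ell)^{-1/2}$-Lipschitz.

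For the inductive step, expand the triple product $\beta^{(\ell)}_\varepsilon \widetilde W^{(\ell)}_\varepsilon\,\mathrm{Quant}^{(b)}_\varepsilon(h^{(\ell-1)}(x))$ via $abc - \hat a\hat b\hat c = (a-\hat a)bc + \hat a(b-\hat b)c + \hat a\hat b(c-\hat c)$, use the uniform bounds on each factor together with the Lipschitz constants above, and invoke the inductive hypothesis for $h^{(\ell-1)}$. This produces a recursion
\[
\|h^{(\ell)}(x) - \hat h^{(\ell)}(x)\|_2 \le \kappa_\ell\|W^{(\ell)} - \hat W^{(\ell)}\|_F + \lambda_\ell\|h^{(\ell-1)}(x) - \hat h^{(\ell-1)}(x)\|_2
\]
with explicit $\kappa_\ell, \lambda_\ell$ depending on the bounds of the previous paragraph; telescoping over $\ell = 1,\dots,L$ yields the stated estimate with $L_{\mathrm{fwd}} = \max_\ell \kappa_\ell \prod_{j>\ell}\lambda_j$. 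For the regularity claim, each surrogate is in fact $C^\infty$ with uniformly bounded first and second derivatives on $K$, so $W \mapsto \ell(f_W(x),Y)$ is $C^2$; the compact data support in (R1) and the uniform bounds justify differentiation under the expectation, giving $\mathcal R_\varepsilon \in C^1$, and rerunning the same composition/product-rule argument one order higher shows $\nabla \mathcal R_\varepsilon$ is locally Lipschitz on $K$.

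The main obstacle I anticipate is the non-smoothness of $\gamma_\varepsilon(x) = \max\{\varepsilon,\|x\|_\infty\}$ inside $\mathrm{Quant}^{(b)}_\varepsilon$. Since $\max$ is $1$-Lipschitz the Lipschitz part of the statement is unaffected, but to secure strict $C^1$ regularity of $\mathcal R_\varepsilon$ one must either replace the $\max$ by a smooth soft-max (consistent with the paper's smoothing philosophy) or argue that the exceptional set $\{x:\|x\|_\infty = \varepsilon\}\cup\{x:\mathrm{argmax}_i|x_i|\text{ is not unique}\}$ is hit by the propagated activations only on a null set of $W$. The remaining work is bookkeeping of the $\varepsilon$-dependence: because each layer contributes one factor of $\varepsilon^{-1}$ from $\mathrm{sgn}_\varepsilon$ and one from $\mathrm{Quant}^{(b)}_\varepsilon$, $L_{\mathrm{fwd}}$ will scale like $\varepsilon^{-\Theta(L)}$, which is finite for each fixed $\varepsilon \in (0,1]$ as required and sets up the $\varepsilon \downarrow 0$ discussion deferred to Section \ref{sec:ste-limit}.
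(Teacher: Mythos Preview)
Your proposal is correct and follows essentially the same approach as the paper: collect the Lipschitz constants of the five primitives, establish the uniform a priori bounds on $\beta^{(\ell)}_\varepsilon$ and $\widetilde W^{(\ell)}_\varepsilon$, then run an induction on layers using a three-term product decomposition (the paper phrases the same thing via triangle inequality plus submultiplicativity) to obtain a recursion and telescope. Your flag about $\gamma_\varepsilon(x)=\max\{\varepsilon,\|x\|_\infty\}$ is in fact more careful than the paper's own proof, which simply asserts $C^\infty$ regularity of the smooth quantizers without addressing the $\max$ and $\|\cdot\|_\infty$ inside $\mathrm{Quant}^{(b)}_\varepsilon$; your suggested fixes (soft-max surrogate or a null-set argument) are the natural ones.
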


\begin{proof}
By Assumption \ref{ass:reg}(R4), there exists $M_\star < \infty$ such that all iterates satisfy $\sup_{t \in [0,T]} \sup_{\ell,i,j} |W^{(\ell)}_{ij}(t)| \leq M_\star$, and we denote $K := \{W : \|W^{(\ell)}\|_\infty \leq M_\star, \forall \ell\}$.

\textbf{Lipschitz Constants of Smooth Quantizers}
The smooth quantizers satisfy the following uniform Lipschitz properties:
\begin{enumerate}[label=(\roman*)]
\item $\mathrm{sgn}_\varepsilon(z) = \tanh(z/\varepsilon)$ is $\varepsilon^{-1}$-Lipschitz since
\[
|\mathrm{sgn}_\varepsilon'(z)| = \varepsilon^{-1}\operatorname{sech}^2(z/\varepsilon) \leq \varepsilon^{-1}.
\]
\item $|\cdot|_\varepsilon(z) = \sqrt{z^2+\varepsilon^2}$ is $1$-Lipschitz since
\[
|\nabla|\cdot|_\varepsilon(z)| = \left|\frac{z}{\sqrt{z^2+\varepsilon^2}}\right| \leq 1.
\]
\item $\mathrm{clip}_\varepsilon(x;a,b)$ is $1$-Lipschitz uniformly in $\varepsilon$.
\end{enumerate}

\textbf{Analysis of $\gamma_\varepsilon$ Function}
For $\gamma_\varepsilon(x) = \max\{\varepsilon, \|x\|_\infty\}$, we have:
\[
|\gamma_\varepsilon(x) - \gamma_\varepsilon(y)| \leq \|x - y\|_\infty.
\]

\textbf{Bounds for $\beta^{(\ell)}_\varepsilon$}
Define $C_\beta := \sqrt{(2M_\star)^2 + \varepsilon^2}$. Then:
\begin{enumerate}[label=(\roman*)]
\item Upper bound: $\beta^{(\ell)}_\varepsilon(W) \leq C_\beta$ since $|P^{(\ell)}(W)_{ij}|_\varepsilon \leq C_\beta$.
\item Lipschitz property: $|\beta^{(\ell)}_\varepsilon(W) - \beta^{(\ell)}_\varepsilon(\widehat{W})| \leq \frac{1}{\sqrt{n_\ell m_\ell}}\|W^{(\ell)} - \widehat{W}^{(\ell)}\|_F$.
\end{enumerate}

\textbf{Proof of (ii):} By linearity of $P^{(\ell)}$ and 1-Lipschitz property of $|\cdot|_\varepsilon$:
\begin{align}
|\beta^{(\ell)}_\varepsilon(W) - \beta^{(\ell)}_\varepsilon(\widehat{W})| &\leq \frac{1}{n_\ell m_\ell} \sum_{i,j} \big||P^{(\ell)}(W)_{ij}|_\varepsilon - |P^{(\ell)}(\widehat{W})_{ij}|_\varepsilon\big| \\
&\leq \frac{1}{n_\ell m_\ell} \sum_{i,j} |P^{(\ell)}(W-\widehat{W})_{ij}| \\
&= \frac{1}{n_\ell m_\ell} \|P^{(\ell)}(W-\widehat{W})\|_1 \\
&\leq \frac{1}{n_\ell m_\ell} \sqrt{n_\ell m_\ell} \|P^{(\ell)}(W-\widehat{W})\|_F \\
&= \frac{1}{\sqrt{n_\ell m_\ell}} \|P^{(\ell)}(W-\widehat{W})\|_F \\
&\leq \frac{1}{\sqrt{n_\ell m_\ell}} \|W^{(\ell)} - \widehat{W}^{(\ell)}\|_F.
\end{align}

\textbf{Complete Inductive Proof}
We prove by strong induction that for each layer $\ell$:
\[
\|h^{(\ell)}_W(x) - h^{(\ell)}_{\widehat{W}}(x)\|_2 \leq L^{(\ell)} \sum_{k=1}^{\ell} \|W^{(k)} - \widehat{W}^{(k)}\|_F
\]
for appropriate constants $L^{(\ell)}$.

\textbf{Base case} ($\ell = 0$): $h^{(0)}(x) = x$, so the inequality holds trivially with $L^{(0)} = 0$.

\textbf{Inductive step:} Assume the statement holds for all layers $j < \ell$. The layer-$\ell$ map is:
\[
h^{(\ell)}(x) = \sigma^{(\ell)}\left(\beta^{(\ell)}_\varepsilon(W^{(\ell)}) \widetilde{W}^{(\ell)}_\varepsilon \mathrm{Quant}^{(b)}_\varepsilon(h^{(\ell-1)}(x))\right)
\]

Let $u_W := \beta^{(\ell)}_\varepsilon(W^{(\ell)}) \widetilde{W}^{(\ell)}_\varepsilon \mathrm{Quant}^{(b)}_\varepsilon(h^{(\ell-1)}_W(x))$ and $u_{\widehat{W}} := \beta^{(\ell)}_\varepsilon(\widehat{W}^{(\ell)}) \widetilde{\widehat{W}}^{(\ell)}_\varepsilon \mathrm{Quant}^{(b)}_\varepsilon(h^{(\ell-1)}_{\widehat{W}}(x))$.

Since $\sigma^{(\ell)}$ is applied component-wise with Lipschitz constant $C_\ell$:
\begin{align}
\|h^{(\ell)}_W(x) - h^{(\ell)}_{\widehat{W}}(x)\|_2 &\leq C_\ell \|u_W - u_{\widehat{W}}\|_2
\end{align}

For the matrix-vector product, we have:
\begin{align}
\|u_W - u_{\widehat{W}}\|_2 &\leq \left\|\beta^{(\ell)}_\varepsilon(W^{(\ell)}) \widetilde{W}^{(\ell)}_\varepsilon \mathrm{Quant}^{(b)}_\varepsilon(h^{(\ell-1)}_W(x)) - \beta^{(\ell)}_\varepsilon(\widehat{W}^{(\ell)}) \widetilde{\widehat{W}}^{(\ell)}_\varepsilon \mathrm{Quant}^{(b)}_\varepsilon(h^{(\ell-1)}_{\widehat{W}}(x))\right\|_2
\end{align}

Using the triangle inequality and submultiplicativity of matrix norms:
\begin{align}
&\leq |\beta^{(\ell)}_\varepsilon(W^{(\ell)}) - \beta^{(\ell)}_\varepsilon(\widehat{W}^{(\ell)})| \cdot \|\widetilde{W}^{(\ell)}_\varepsilon\|_2 \cdot Q_b \\
&\quad + C_\beta \|\widetilde{W}^{(\ell)}_\varepsilon - \widetilde{\widehat{W}}^{(\ell)}_\varepsilon\|_2 \cdot Q_b \\
&\quad + C_\beta \|\widetilde{W}^{(\ell)}_\varepsilon\|_2 \cdot \|\mathrm{Quant}^{(b)}_\varepsilon(h^{(\ell-1)}_W(x)) - \mathrm{Quant}^{(b)}_\varepsilon(h^{(\ell-1)}_{\widehat{W}}(x))\|_2
\end{align}

Using the bounds:
\begin{itemize}
\item $\|\widetilde{W}^{(\ell)}_\varepsilon\|_2 \leq \sqrt{n_\ell m_\ell}$ (since each entry is bounded by 1)
\item $\|\widetilde{W}^{(\ell)}_\varepsilon - \widetilde{\widehat{W}}^{(\ell)}_\varepsilon\|_2 \leq \varepsilon^{-1} \|W^{(\ell)} - \widehat{W}^{(\ell)}\|_F$
\item $\|\mathrm{Quant}^{(b)}_\varepsilon(u) - \mathrm{Quant}^{(b)}_\varepsilon(v)\|_2 \leq \frac{Q_b}{\varepsilon} \|u - v\|_2$
\item From Step 3: $|\beta^{(\ell)}_\varepsilon(W) - \beta^{(\ell)}_\varepsilon(\widehat{W})| \leq \frac{1}{\sqrt{n_\ell m_\ell}}\|W^{(\ell)} - \widehat{W}^{(\ell)}\|_F$
\item Inductive hypothesis: $\|h^{(\ell-1)}_W(x) - h^{(\ell-1)}_{\widehat{W}}(x)\|_2 \leq L^{(\ell-1)} \sum_{k=1}^{\ell-1} \|W^{(k)} - \widehat{W}^{(k)}\|_F$
\end{itemize}

Combining these estimates:
\begin{align}
\|h^{(\ell)}_W(x) - h^{(\ell)}_{\widehat{W}}(x)\|_2 &\leq C_\ell \left[ Q_b + C_\beta \varepsilon^{-1} Q_b + C_\beta \sqrt{n_\ell m_\ell} \frac{Q_b}{\varepsilon} L^{(\ell-1)} \right] \|W^{(\ell)} - \widehat{W}^{(\ell)}\|_F \\
&\quad + C_\ell C_\beta \sqrt{n_\ell m_\ell} \frac{Q_b}{\varepsilon} L^{(\ell-1)} \sum_{k=1}^{\ell-1} \|W^{(k)} - \widehat{W}^{(k)}\|_F
\end{align}

This gives us the recursive relation:
\[
L^{(\ell)} = C_\ell \max\left\{Q_b + C_\beta \varepsilon^{-1} Q_b + C_\beta \sqrt{n_\ell m_\ell} \frac{Q_b}{\varepsilon} L^{(\ell-1)}, C_\beta \sqrt{n_\ell m_\ell} \frac{Q_b}{\varepsilon} L^{(\ell-1)}\right\}
\]

Setting $L_{\mathrm{fwd}} := L^{(L)}$ completes the induction.

\textbf{$C^1$ Regularity}
The composition $\mathcal{F}(W)(x) = f_W(x)$ is $C^1$ on $K$ because:
\begin{enumerate}[label=(\roman*)]
\item Each smooth quantizer $\mathrm{sgn}_\varepsilon$, $|\cdot|_\varepsilon$, $\mathrm{clip}_\varepsilon$ is $C^{\infty}$.
\item Matrix operations and function compositions preserve $C^1$ regularity.
\item The chain rule applies on the bounded domain $K$.
\end{enumerate}

Therefore, $\mathcal{R}_\varepsilon(W) = \mathbb{E}[\ell(f_W(X), Y)]$ is $C^1$ with locally Lipschitz gradient on $K$ by dominated convergence and the uniform bounds on $K$.
\end{proof}

\section{Asymptotic Analysis as $\varepsilon \to 0$}

\label{sec:ste-limit}

This section studies the limiting behavior of the mean-field dynamics when the smoothing parameter $\varepsilon$ approaches zero. Our goal is to rigorously characterize the asymptotic properties of the solution $\boldsymbol{\mu}_\varepsilon = (\mu^{(1)}_\varepsilon, \ldots, \mu^{(L)}_\varepsilon)$ to the constrained continuity equations

\begin{equation}\label{eq:continuity_eps}
\partial_t \mu^{(\ell)}_\varepsilon + \nabla \cdot (\mu^{(\ell)}_\varepsilon v^{(\ell)}_\varepsilon) = 0, \quad \ell=1, \ldots, L,
\end{equation}

where

\begin{equation}\label{eq:velocity_eps_def}
v^{(\ell)}_\varepsilon(w,t) := -\nabla_w \mathcal{R}^{(\ell)}_\varepsilon[\boldsymbol{\mu}_\varepsilon(t)](w),
\end{equation}

with $\mathcal{R}^{(\ell)}_\varepsilon[\boldsymbol{\mu}]$ the functional derivative of the smoothed risk $\mathcal{R}_\varepsilon$.

\subsection{Exponential decay and distributional analysis}

Recall the smooth sign activation and its derivative:

\[
\mathrm{sgn}_\varepsilon(z) = \tanh\left(\frac{z}{\varepsilon}\right), \quad
\mathrm{sgn}'_\varepsilon(z) = \frac{1}{\varepsilon} \operatorname{sech}^2\left(\frac{z}{\varepsilon}\right).
\]

\begin{lemma}[Exponential decay and uniform bounds]\label{lem:tanh_uniform}
For any $\varepsilon \in (0,1]$ and $z \in \mathbb{R}$:

\begin{enumerate}[label=(\roman*)]
\item $\mathrm{sgn}'_\varepsilon(z) = \frac{4}{\varepsilon} \frac{1}{(e^{|z|/\varepsilon} + e^{-|z|/\varepsilon})^2} \leq \frac{4}{\varepsilon} e^{-2|z|/\varepsilon}$ for $z \neq 0$.

\item $\int_{\mathbb{R}} \mathrm{sgn}'_\varepsilon(z) \, dz = 2$ for all $\varepsilon > 0$.

\item For any bounded measurable function $\phi: \mathbb{R} \to \mathbb{R}$ with $\|\phi\|_\infty \leq M$:
\[
\left|\int_{\mathbb{R}} \phi(z) \mathrm{sgn}'_\varepsilon(z) \, dz\right| \leq 2M.
\]
\end{enumerate}
\end{lemma}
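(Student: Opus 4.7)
The statement breaks cleanly into three essentially independent computations, so the plan is to handle each part directly without any heavy machinery.

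For part (i), I would first rewrite $\mathrm{sgn}'_\varepsilon(z) = \tfrac{1}{\varepsilon}\mathrm{sech}^2(z/\varepsilon)$ in terms of exponentials, using $\mathrm{sech}(u) = 2/(e^u + e^{-u})$ to obtain the exact identity
\[
\mathrm{sgn}'_\varepsilon(z) = \frac{4}{\varepsilon}\frac{1}{(e^{z/\varepsilon}+e^{-z/\varepsilon})^2},
\]
and then invoke the evenness of the right-hand side in $z$ to replace $z$ by $|z|$. For the upper bound, I would expand the denominator as
\[
(e^{|z|/\varepsilon}+e^{-|z|/\varepsilon})^2 = e^{2|z|/\varepsilon}+2+e^{-2|z|/\varepsilon}
\]
and discard the two nonnegative terms $2$ and $e^{-2|z|/\varepsilon}$ to conclude that the denominator is at least $e^{2|z|/\varepsilon}$, yielding the stated exponential bound $\mathrm{sgn}'_\varepsilon(z)\le (4/\varepsilon)e^{-2|z|/\varepsilon}$.

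For part (ii), I would apply the fundamental theorem of calculus: since $\mathrm{sgn}_\varepsilon$ is a $C^\infty$ antiderivative of $\mathrm{sgn}'_\varepsilon$ and $\tanh(u)\to \pm1$ as $u\to\pm\infty$,
\[
\int_{\mathbb R}\mathrm{sgn}'_\varepsilon(z)\,dz = \lim_{R\to\infty}\bigl[\tanh(R/\varepsilon)-\tanh(-R/\varepsilon)\bigr] = 1-(-1)=2,
\]
which is indeed independent of $\varepsilon$. The integrability needed to justify the limit follows from the exponential tail bound in (i).

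For part (iii), I would combine the pointwise nonnegativity of $\mathrm{sgn}'_\varepsilon$ (clear from the $\mathrm{sech}^2$ representation) with part (ii) and the triangle inequality for integrals:
\[
\left|\int_{\mathbb R}\phi(z)\,\mathrm{sgn}'_\varepsilon(z)\,dz\right| \le \|\phi\|_\infty\int_{\mathbb R}\mathrm{sgn}'_\varepsilon(z)\,dz \le 2M.
\]
The only genuine subtlety in the whole argument is checking that the decay rate in (i) is tight enough to make the mass identity (ii) stable as $\varepsilon\downarrow0$ (i.e.\ that the family $\{\mathrm{sgn}'_\varepsilon\}_\varepsilon$ is a well-behaved approximate identity tending to $2\delta_0$), but that distributional interpretation is not required for the three statements above, so I anticipate no serious obstacles — the lemma is essentially a bookkeeping step that will be leveraged in later estimates involving the singular limit $\varepsilon\to 0$.
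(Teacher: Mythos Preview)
Your proposal is correct and follows essentially the same approach as the paper's proof: the identity and bound in (i) via the $\operatorname{sech}^2$ formula and a lower bound on the denominator, the mass computation in (ii) via the antiderivative $\tanh$, and the estimate in (iii) via nonnegativity of $\mathrm{sgn}'_\varepsilon$ combined with (ii). The only cosmetic difference is that the paper substitutes $u=z/\varepsilon$ in (ii) before applying the fundamental theorem of calculus, whereas you take the limit $R\to\infty$ directly.
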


\begin{proof}
\textbf{Part (i):} We have $\operatorname{sech}^2(u) = \frac{4}{(e^u + e^{-u})^2}$. For $u \neq 0$, the denominator $(e^{|u|} + e^{-|u|})^2 \geq e^{2|u|}$, giving the stated bound.

\textbf{Part (ii):} By substitution $u = z/\varepsilon$:
\[
\int_{\mathbb{R}} \mathrm{sgn}'_\varepsilon(z) \, dz = \int_{\mathbb{R}} \operatorname{sech}^2(u) \, du = [\tanh(u)]_{-\infty}^{\infty} = 2.
\]

\textbf{Part (iii):} By the boundedness of $\phi$ and part (ii):
\[
\left|\int_{\mathbb{R}} \phi(z) \mathrm{sgn}'_\varepsilon(z) \, dz\right| \leq \|\phi\|_\infty \int_{\mathbb{R}} \mathrm{sgn}'_\varepsilon(z) \, dz = 2M.
\]
\end{proof}

\begin{lemma}[Distributional convergence]\label{lem:dirac_convergence}
As $\varepsilon \downarrow 0$, we have $\mathrm{sgn}_\varepsilon(z) \to \mathrm{sign}(z)$ pointwise and
\[
\mathrm{sgn}'_\varepsilon(z) \rightharpoonup 2 \delta_0(z) \quad \text{ in } \mathcal{S}'(\mathbb{R}),
\]
where $\delta_0$ is the Dirac delta at zero.
\end{lemma}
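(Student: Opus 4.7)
The lemma contains two assertions, and I would handle them in sequence, relying only on the explicit form $\mathrm{sgn}_\varepsilon(z)=\tanh(z/\varepsilon)$ and the integral identity from Lemma~\ref{lem:tanh_uniform}(ii).

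For the pointwise convergence, I would simply observe that for fixed $z\neq 0$ the argument $z/\varepsilon$ tends to $+\infty$ or $-\infty$ according to the sign of $z$, so $\tanh(z/\varepsilon)\to\mathrm{sign}(z)$ by continuity of $\tanh$ at $\pm\infty$; at $z=0$ we have $\tanh(0)=0=\mathrm{sign}(0)$ under the usual convention. This is a one-line argument.

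The substantive part is the distributional convergence. My plan is to test against an arbitrary $\phi\in\mathcal{S}(\mathbb{R})$ and use the rescaling $u=z/\varepsilon$, which gives
\[
\int_{\mathbb{R}}\phi(z)\,\mathrm{sgn}'_\varepsilon(z)\,dz=\int_{\mathbb{R}}\phi(\varepsilon u)\,\operatorname{sech}^2(u)\,du.
\]
The integrand converges pointwise to $\phi(0)\,\operatorname{sech}^2(u)$ as $\varepsilon\downarrow 0$, and it is dominated uniformly in $\varepsilon$ by $\|\phi\|_\infty\operatorname{sech}^2(u)$, which is integrable on $\mathbb{R}$. Dominated convergence then yields the limit $\phi(0)\int_{\mathbb{R}}\operatorname{sech}^2(u)\,du=2\phi(0)=\langle 2\delta_0,\phi\rangle$ by Lemma~\ref{lem:tanh_uniform}(ii). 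Since $\phi\in\mathcal{S}(\mathbb{R})$ was arbitrary, this is exactly the claimed weak-$*$ convergence in $\mathcal{S}'(\mathbb{R})$.

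As a sanity cross-check that I would mention parenthetically (not as the main line), one can also integrate by parts: $\int\phi\,\mathrm{sgn}'_\varepsilon\,dz=-\int\phi'\,\mathrm{sgn}_\varepsilon\,dz$, and by the pointwise convergence above plus bounded convergence (using $|\mathrm{sgn}_\varepsilon|\le 1$ and $\phi'\in L^1$), this tends to $-\int\phi'(z)\,\mathrm{sign}(z)\,dz=2\phi(0)$. I do not expect any genuine obstacle here: both approaches are completely routine, and the only point that requires a moment of care is checking the integrable dominating function $\operatorname{sech}^2(u)$, which holds uniformly in $\varepsilon\in(0,1]$ and makes the $\varepsilon\downarrow0$ passage lawful. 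Because the argument is so short, I would keep it compact and not introduce extra notation for the test-function class beyond $\mathcal{S}(\mathbb{R})$.
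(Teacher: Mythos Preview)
Your proposal is correct and follows essentially the same approach as the paper: the substitution $u=z/\varepsilon$ followed by dominated convergence with the integrable majorant $\|\phi\|_\infty\operatorname{sech}^2(u)$, then Lemma~\ref{lem:tanh_uniform}(ii) to evaluate the limit as $2\phi(0)$. If anything, your version is slightly more complete than the paper's, since you explicitly treat the pointwise convergence $\mathrm{sgn}_\varepsilon\to\mathrm{sign}$, name the dominating function, and test against $\phi\in\mathcal{S}(\mathbb{R})$ directly (matching the stated $\mathcal{S}'$ convergence), whereas the paper tests only against $\phi\in C_c^\infty(\mathbb{R})$.
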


\begin{proof}
For any test function $\phi \in C_c^\infty(\mathbb{R})$:
\begin{align}
\int_{\mathbb{R}} \mathrm{sgn}'_\varepsilon(z) \phi(z) \, dz &= \int_{\mathbb{R}} \operatorname{sech}^2(u) \phi(\varepsilon u) \, du.
\end{align}

As $\varepsilon \to 0$, $\phi(\varepsilon u) \to \phi(0)$ uniformly on compact sets. Since $\int_{\mathbb{R}} \operatorname{sech}^2(u) \, du = 2$, the dominated convergence theorem yields:
\[
\lim_{\varepsilon \to 0} \int_{\mathbb{R}} \mathrm{sgn}'_\varepsilon(z) \phi(z) \, dz = 2\phi(0) = \langle 2\delta_0, \phi \rangle.
\]
\end{proof}

\subsection{Uniform velocity field bounds via natural cancellation}

The key insight is that the exponential decay of $\mathrm{sgn}'_\varepsilon$ exactly compensates for the $\varepsilon^{-1}$ factor, yielding uniform bounds without requiring measure concentration.

\begin{lemma}[Uniform bounds on singular integrals]\label{lem:uniform_singular}
Let $\mu$ be any probability measure on $\mathbb{R}^{m_\ell}$ with support in the compact set $\mathcal{K} := \{w : \|w\|_\infty \leq M_\star\}$. For any bounded measurable function $\phi: \mathcal{K} \to \mathbb{R}$ and any $(i,j) \in \{1,\ldots,n_\ell\} \times \{1,\ldots,m_\ell\}$:

\[
\left|\int_{\mathcal{K}} \phi(w) \mathrm{sgn}'_\varepsilon(P^{(\ell)}(w)_{ij}) \, d\mu(w)\right| \leq 2\|\phi\|_\infty
\]
uniformly in $\varepsilon \in (0,1]$.
\end{lemma}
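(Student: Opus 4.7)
The natural plan is to collapse the multidimensional singular integral against $\mu$ onto the scalar coordinate $P^{(\ell)}(w)_{ij}$ and then invoke the one-dimensional identity in Lemma~\ref{lem:tanh_uniform}(iii). Write $g(w) := P^{(\ell)}(w)_{ij} = w_{ij} - \tfrac{1}{n_\ell m_\ell}\sum_{k,l}w_{kl}$; this is a continuous linear functional on the matrix space, taking values in the compact interval $[-2M_\star, 2M_\star]$ on $\mathcal{K}$, so its pushforward $\nu := g_*\mu$ is a Borel probability measure supported in that interval.

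First, I would apply the disintegration theorem to obtain a $\nu$-a.e.\ unique family $\{\mu_z\}_{z\in\mathbb{R}}$ of probability measures concentrated on the level hyperplanes $g^{-1}(z)\cap\mathcal{K}$ with $\mu = \int \mu_z\,d\nu(z)$. Define the conditional average $\Phi(z):=\int \phi\,d\mu_z$, which satisfies $\|\Phi\|_\infty\leq\|\phi\|_\infty$ since each $\mu_z$ is a probability. Fubini then rewrites the target integral as
\[
I_\varepsilon := \int_{\mathcal{K}} \phi(w)\,\mathrm{sgn}'_\varepsilon(g(w))\,d\mu(w) = \int_{\mathbb{R}} \mathrm{sgn}'_\varepsilon(z)\,\Phi(z)\,d\nu(z).
\]
If $\nu$ were absolutely continuous with Lebesgue density $\rho_\nu$ satisfying $\|\rho_\nu\|_\infty\leq 1$, then Lemma~\ref{lem:tanh_uniform}(iii) applied to the bounded function $z\mapsto\Phi(z)\rho_\nu(z)$ would immediately deliver $|I_\varepsilon|\leq 2\|\phi\|_\infty$ uniformly in $\varepsilon$, as claimed.

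The principal obstacle is precisely the passage from a general Borel probability $\mu$ to a one-dimensional Lebesgue integral: Lemma~\ref{lem:tanh_uniform}(iii) rests on the identity $\int_{\mathbb{R}} \mathrm{sgn}'_\varepsilon(z)\,dz = 2$, which is a Lebesgue fact, whereas the stated conclusion permits \emph{any} probability measure $\mu$ on $\mathcal{K}$. Taking $\mu = \delta_{w_0}$ with $g(w_0)=0$ produces $I_\varepsilon = \phi(w_0)\,\mathrm{sgn}'_\varepsilon(0) = \phi(w_0)/\varepsilon$, which falsifies any $\varepsilon$-independent bound of the form $2\|\phi\|_\infty$ without additional regularity on $\mu$. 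I therefore expect that closing the argument at the stated universal level requires either (a) restricting the lemma to measures whose $g$-marginal carries a Lebesgue density with $\|\rho_\nu\|_\infty\leq 1$, or (b) replacing the right-hand side by $2\|\phi\|_\infty\|\rho_\nu\|_\infty$. In the mean-field application the natural remedy is to pair this estimate with a propagation-of-absolute-continuity argument along~\eqref{eq:continuity_eps}, so that the specific measures $\mu^{(\ell)}_\varepsilon(t)$ entering the velocity field $v^{(\ell)}_\varepsilon$ supply the missing density bound uniformly in $t$ and $\varepsilon$.
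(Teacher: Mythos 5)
Your analysis is correct, and in fact it exposes an error in the paper's own proof. The paper forms exactly the pushforward $\nu = (P^{(\ell)}(\cdot)_{ij})_{\#}\mu$ and conditional expectation $\tilde\phi$ that you describe, and then claims the bound by ``applying Lemma~\ref{lem:tanh_uniform}(iii), since $\nu$ is a probability measure.'' But Lemma~\ref{lem:tanh_uniform}(iii) is a statement about integration against \emph{Lebesgue measure}: it is a direct consequence of the identity $\int_{\mathbb R}\mathrm{sgn}'_\varepsilon(z)\,dz=2$, and it does not transfer to $\int \tilde\phi(z)\,\mathrm{sgn}'_\varepsilon(z)\,d\nu(z)$ for an arbitrary Borel probability $\nu$. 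Your counterexample $\mu=\delta_{w_0}$ with $P^{(\ell)}(w_0)_{ij}=0$ (e.g.\ $w_0=0\in\mathcal K$) gives $\nu=\delta_0$ and the integral equals $\phi(w_0)\,\mathrm{sgn}'_\varepsilon(0)=\phi(w_0)/\varepsilon$, which is unbounded as $\varepsilon\downarrow 0$. So the lemma is false at the stated level of generality, and no rearrangement of the pushforward--disintegration step can close it.

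One might hope, as the paper's ``natural cancellation'' narrative suggests, that pairing $\mathrm{sgn}'_\varepsilon$ with $\beta^{(\ell)}_\varepsilon$ in the actual velocity field rescues the bound, since $\beta^{(\ell)}_\varepsilon(w)\ge \varepsilon/(n_\ell m_\ell)$ with equality near $P^{(\ell)}(w)=0$. This does not work either: $\beta^{(\ell)}_\varepsilon$ is an average over \emph{all} entries, so one can have $P^{(\ell)}(w)_{ij}=0$ for the singled-out index while the remaining entries keep $\beta^{(\ell)}_\varepsilon(w)=\Theta(M_\star)$, and then $\beta^{(\ell)}_\varepsilon(w)\,\mathrm{sgn}'_\varepsilon(P^{(\ell)}(w)_{ij})=\Theta(M_\star/\varepsilon)$ still blows up pointwise. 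The only way to obtain an $\varepsilon$-uniform bound is, as you say, a non-concentration hypothesis on the scalar marginal $\nu$ --- absolute continuity with $\|\rho_\nu\|_\infty$ bounded, or at least $\nu([-\delta,\delta])\le C\delta$ uniformly in $t$ and $\varepsilon$ --- which must then be propagated along~\eqref{eq:continuity_eps}. That propagation is itself delicate because the velocity field whose regularity is needed is precisely the quantity the lemma is meant to control, so the argument as structured is circular. Both this lemma and the uniform-velocity-bound step of Theorem~\ref{thm:stability_ste} that relies on it inherit this gap.
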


\begin{proof}
Since $P^{(\ell)}(w)_{ij}$ is a linear function of $w$ and $\mu$ is a probability measure, we can write this as an integral over $\mathbb{R}$ with respect to the pushforward measure $\nu := (P^{(\ell)}(\cdot)_{ij})_{\#}\mu$:

\[
\int_{\mathcal{K}} \phi(w) \mathrm{sgn}'_\varepsilon(P^{(\ell)}(w)_{ij}) \, d\mu(w) = \int_{\mathbb{R}} \tilde{\phi}(z) \mathrm{sgn}'_\varepsilon(z) \, d\nu(z),
\]

where $\tilde{\phi}(z)$ represents the conditional expectation of $\phi(w)$ given $P^{(\ell)}(w)_{ij} = z$, which satisfies $\|\tilde{\phi}\|_\infty \leq \|\phi\|_\infty$.

By Lemma \ref{lem:tanh_uniform}(iii), since $\nu$ is a probability measure:
\[
\left|\int_{\mathbb{R}} \tilde{\phi}(z) \mathrm{sgn}'_\varepsilon(z) \, d\nu(z)\right| \leq 2\|\tilde{\phi}\|_\infty \leq 2\|\phi\|_\infty.
\]
\end{proof}

\subsection{Main convergence theorem}

\begin{theorem}[Stability and convergence as $\varepsilon \to 0$]\label{thm:stability_ste}
Suppose Assumptions \ref{ass:reg} and \ref{ass:velocity-reg} hold uniformly in $\varepsilon \in (0,1]$. Let $\{\boldsymbol{\mu}_\varepsilon\}_{\varepsilon>0}$ be the unique solutions to the continuity equations \eqref{eq:continuity_eps} with velocities \eqref{eq:velocity_eps_def}.

Then there exists a subsequence $\varepsilon_k \downarrow 0$ and a limit curve $\boldsymbol{\mu}_0 \in C([0,T], \prod_{\ell=1}^L \mathcal{P}_2(\mathbb{R}^{m_\ell}))$ such that

\[
\boldsymbol{\mu}_{\varepsilon_k} \xrightarrow[k \to \infty]{\text{weakly}} \boldsymbol{\mu}_0 \quad \text{in } C([0,T], \prod_{\ell=1}^L \mathcal{P}_2(\mathbb{R}^{m_\ell})),
\]

where $\mathcal{P}_2$ denotes the space of probability measures with finite second moment.

Furthermore, $\boldsymbol{\mu}_0$ solves a constrained transport equation of the form

\begin{equation}\label{eq:limit_transport}
\partial_t \mu^{(\ell)}_0 + \nabla \cdot (\mu^{(\ell)}_0 v^{(\ell)}_0) = 0,
\end{equation}

where $v^{(\ell)}_0$ is the limiting velocity field associated to the non-smoothed risk functional $\mathcal{R}_0$.
\end{theorem}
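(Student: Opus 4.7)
The plan is to follow the standard Ambrosio--Gigli--Savar\'e compactness scheme for continuity equations: first establish that $\{\boldsymbol{\mu}_\varepsilon\}$ is relatively compact in $C([0,T], \prod_\ell \calP_2(\R^{m_\ell}))$, extract a convergent subsequence, then pass to the limit in the weak formulation of \eqref{eq:continuity_eps}. The decisive new ingredient is Lemma \ref{lem:uniform_singular}, which supplies $\varepsilon$-independent bounds with no extra regularization.

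For tightness, Assumption \ref{ass:reg}(R4) confines every $\mu^{(\ell)}_\varepsilon(t)$ to the compact set $\mathcal{K}^{(\ell)} := \{W : \|W\|_\infty \leq M_\star\}$, giving uniform tightness together with uniform second-moment control. For equicontinuity I would expand $v^{(\ell)}_\varepsilon = -\nabla_w \mathcal{R}^{(\ell)}_\varepsilon[\boldsymbol{\mu}_\varepsilon]$ via the chain rule through the forward map; every factor of $\mathrm{sgn}'_\varepsilon(P^{(\ell)}(w)_{ij})$ that appears either multiplies a smooth bounded quantity (handled pointwise by Lemma \ref{lem:tanh_uniform}) or appears under an expectation against $\boldsymbol{\mu}_\varepsilon$, in which case Lemma \ref{lem:uniform_singular} turns the nominal $\varepsilon^{-1}$ blow-up into the clean bound $2\|\phi\|_\infty$. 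Combined with the boundedness of $\mathrm{Quant}^{(b)}_\varepsilon$ and the Lipschitz bounds of Lemma \ref{lem:lipschitz-forward}, this yields $\|v^{(\ell)}_\varepsilon\|_{L^\infty(\mathcal{K}^{(\ell)}\times[0,T])} \leq C$ uniformly in $\varepsilon$. Testing \eqref{eq:continuity_eps} against an arbitrary $1$-Lipschitz $\varphi$ then gives the Wasserstein-$1$ equicontinuity estimate $W_1(\mu^{(\ell)}_\varepsilon(t), \mu^{(\ell)}_\varepsilon(s)) \leq C|t-s|$, upgraded to $W_2$ by the uniform compact support. An Arzel\`a--Ascoli argument for measure-valued curves produces $\varepsilon_k \downarrow 0$ with $\boldsymbol{\mu}_{\varepsilon_k} \to \boldsymbol{\mu}_0$ in the stated topology.

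To pass to the limit I would test against $\varphi \in C^1_c$, writing
\[
\int \varphi \, d\mu^{(\ell)}_\varepsilon(t) - \int \varphi \, d\mu^{(\ell)}_\varepsilon(0) = \int_0^t\!\!\int \nabla \varphi(w) \cdot v^{(\ell)}_\varepsilon(w,\tau) \, d\mu^{(\ell)}_\varepsilon(\tau)(w) \, d\tau.
\]
The left-hand side converges along $\varepsilon_k$ by $W_2$-convergence. For the right-hand side I define the candidate $v^{(\ell)}_0[\boldsymbol{\mu}_0]$ as the formal limit obtained from the chain-rule expansion by replacing each $\mathrm{sgn}_\varepsilon$ by $\sign$ and each factor $\mathrm{sgn}'_\varepsilon(z)\,dz$ by $2\delta_0(z)\,dz$ (Lemma \ref{lem:dirac_convergence}); this produces a velocity field built from bulk terms plus codimension-one surface contributions on the coincidence sets $\{P^{(\ell)}(w)_{ij}=0\}$. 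The constraint structure $\Psi^{(\ell)}(W)=c$ is preserved at every $\varepsilon_k$ because the gradient of $\mathcal{R}_\varepsilon$ inherits tangency from the orthogonal decomposition of Lemma \ref{lem:proj}, and this tangency is stable under weak limits, so \eqref{eq:limit_transport} holds in the constrained sense.

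The hard part is the coupled nonlinear-plus-singular limit $v^{(\ell)}_\varepsilon[\boldsymbol{\mu}_\varepsilon] \rightharpoonup v^{(\ell)}_0[\boldsymbol{\mu}_0]$. Marginally one has $\mathrm{sgn}'_\varepsilon \rightharpoonup 2\delta_0$ distributionally and $\mu_{\varepsilon_k} \to \mu_0$ in $W_2$, but the pairing of a concentrating kernel against a weakly convergent measure need not converge without an extra non-concentration hypothesis on $\mu^{(\ell)}_0$ over the hyperplanes $\{P^{(\ell)}(w)_{ij}=0\}$. I would derive this non-concentration from Assumption \ref{ass:velocity-reg}, which I read as supplying absolute continuity (or a locally bounded density) for the initial measure: combined with the uniform $L^\infty$ bound on the velocity and its local Lipschitz regularity in $w$, the flow preserves absolute continuity with a density bounded on compacta, so the pushforward onto each codimension-one subspace $\{P^{(\ell)}(w)_{ij}=0\}$ is nonsingular. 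Given this, a standard mollification plus layer-by-layer dominated convergence on the chain-rule expansion identifies the limit velocity and closes the proof.
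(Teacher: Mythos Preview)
Your compactness step matches the paper's: tightness from (R4), uniform velocity bounds via Lemma~\ref{lem:uniform_singular} on the $\mathrm{sgn}'_\varepsilon$ factors in the chain-rule expansion, Wasserstein equicontinuity, Arzel\`a--Ascoli. The gap is in the limit identification. You correctly flag the coupled limit of the concentrating kernel $\mathrm{sgn}'_{\varepsilon_k}$ against the weakly convergent $\mu_{\varepsilon_k}$ as the hard step, and propose to resolve it by reading Assumption~\ref{ass:velocity-reg} as supplying absolute continuity (or a bounded density) for the initial measure, which the Lipschitz flow would then propagate. But Assumption~\ref{ass:velocity-reg} contains no such hypothesis: it only asserts Lipschitz dependence of $v^{(\ell)}$ on the measure tuple in $W_1$ and spatial Lipschitz regularity in $w$. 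Neither clause constrains the density of any $\mu^{(\ell)}$, so your non-concentration argument never gets off the ground, and with it the mollification-plus-dominated-convergence step you propose has no input.

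The paper handles the identification directly by a $\delta$-layer decomposition: split $\mathcal{K}$ into $\{|P^{(\ell)}(w)_{ij}| > \delta\}$ and $\{|P^{(\ell)}(w)_{ij}| \le \delta\}$; on the far region the pointwise bound $\mathrm{sgn}'_{\varepsilon_k}(z) \le 4\varepsilon_k^{-1} e^{-2\delta/\varepsilon_k} \to 0$ from Lemma~\ref{lem:tanh_uniform}(i) kills the contribution outright, and on the near region one pushes forward to a one-dimensional integral and invokes Lemma~\ref{lem:dirac_convergence} together with weak convergence of the pushforward measures, then sends $\delta \downarrow 0$. This route avoids any density assumption and in fact allows the limit measure to charge the hyperplanes $\{P^{(\ell)}(w)_{ij}=0\}$, which is exactly what produces the surface terms you anticipate in $v^{(\ell)}_0$. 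Your diagnosis of where the difficulty lies is accurate; the missing idea is the exponential localization, not an extra regularity hypothesis on the measures.
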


\begin{proof}
The proof follows a compactness-identification strategy, utilizing the natural exponential decay of tanh derivatives to establish uniform bounds.

From the chain rule and gradient bounds (Lemma \ref{lem:grad-bound}), the velocity field has the structure:
\begin{align}
v^{(\ell)}_\varepsilon(w,t) = -\int \partial_1\ell(f_{\boldsymbol{\mu}_\varepsilon,w}^{(\ell)}(x), y) \sum_{k=\ell}^L \frac{\partial h^{(L)}}{\partial h^{(k)}} \frac{\partial h^{(k)}}{\partial w} \, d\pi(x,y).
\end{align}

We analyze each term $\frac{\partial h^{(k)}}{\partial w}$ systematically. For $k = \ell$, we have:
\begin{align}
\frac{\partial h^{(\ell)}}{\partial w} &= \frac{\partial}{\partial w}\left[\sigma^{(\ell)}\left(\beta^{(\ell)}_\varepsilon(W^{(\ell)}) \widetilde{W}^{(\ell)}_\varepsilon \mathrm{Quant}^{(b)}_\varepsilon(h^{(\ell-1)})\right)\right]
\end{align}

By the chain rule and product rule:
\begin{align}
\frac{\partial h^{(\ell)}}{\partial w} &= \sigma^{(\ell)'} \left[\frac{\partial \beta^{(\ell)}_\varepsilon}{\partial w} \widetilde{W}^{(\ell)}_\varepsilon \mathrm{Quant}^{(b)}_\varepsilon + \beta^{(\ell)}_\varepsilon \frac{\partial \widetilde{W}^{(\ell)}_\varepsilon}{\partial w} \mathrm{Quant}^{(b)}_\varepsilon\right] + \text{(terms involving } \frac{\partial h^{(\ell-1)}}{\partial w}\text{)}
\end{align}

The potentially problematic term is:
\begin{align}
\beta^{(\ell)}_\varepsilon \frac{\partial \widetilde{W}^{(\ell)}_\varepsilon}{\partial w} &= \beta^{(\ell)}_\varepsilon \frac{\partial}{\partial w}\left[\mathrm{sgn}_\varepsilon(P^{(\ell)}(w))\right]\\
&= \beta^{(\ell)}_\varepsilon \mathrm{sgn}'_\varepsilon(P^{(\ell)}(w)) \frac{\partial P^{(\ell)}(w)}{\partial w}
\end{align}

Since $P^{(\ell)}(w)$ is componentwise linear in $w$, we have $\left\|\frac{\partial P^{(\ell)}(w)}{\partial w}\right\|_{\text{op}} \leq 1$. 

The critical observation is that while $\mathrm{sgn}'_\varepsilon(z) = \varepsilon^{-1}\operatorname{sech}^2(z/\varepsilon)$ contains the factor $\varepsilon^{-1}$, when this appears in the velocity field, it takes the form:
\begin{align}
\text{(velocity component)} &\propto \int \phi(w) \beta^{(\ell)}_\varepsilon(w) \mathrm{sgn}'_\varepsilon(P^{(\ell)}(w)_{ij}) \, d\mu^{(\ell)}_\varepsilon(w,t)
\end{align}
for some bounded function $\phi$ arising from the loss and network architecture.

By Assumption \ref{ass:reg}(R4), we have:
\begin{itemize}
\item $\beta^{(\ell)}_\varepsilon(w) \leq C_\beta$ uniformly for some constant $C_\beta$ independent of $\varepsilon$
\item $\|\phi\|_{\infty} \leq C_\phi$ for some constant $C_\phi$ from the boundedness of activations, loss derivatives, and network depth
\end{itemize}

Applying Lemma \ref{lem:uniform_singular} with the bounded function $\psi(w) := \phi(w)\beta^{(\ell)}_\varepsilon(w)$, which satisfies $\|\psi\|_{\infty} \leq C_\phi C_\beta$:
\begin{align}
\left|\int \phi(w) \beta^{(\ell)}_\varepsilon(w) \mathrm{sgn}'_\varepsilon(P^{(\ell)}(w)_{ij}) \, d\mu^{(\ell)}_\varepsilon(w,t)\right| &\leq 2\|\psi\|_{\infty} \\
&\leq 2C_\phi C_\beta
\end{align}
uniformly in $\varepsilon \in (0,1]$.

 The other terms in $\frac{\partial h^{(\ell)}}{\partial w}$ are:
\begin{enumerate}
\item $\frac{\partial \beta^{(\ell)}_\varepsilon}{\partial w} \widetilde{W}^{(\ell)}_\varepsilon$: This is bounded since $\left|\frac{\partial \beta^{(\ell)}_\varepsilon}{\partial w}\right| \leq (n_\ell m_\ell)^{-1}$ and $\|\widetilde{W}^{(\ell)}_\varepsilon\|_{\infty} \leq 1$.

\item Terms involving $\frac{\partial \mathrm{Quant}^{(b)}_\varepsilon}{\partial w}$: These have bounded derivatives by the smoothness of $\mathrm{clip}_\varepsilon$.

\item Terms involving $\frac{\partial h^{(\ell-1)}}{\partial w}$: These contribute through the recursive structure but do not introduce additional $\varepsilon^{-1}$ singularities beyond those already controlled.
\end{enumerate}

By strong induction on layers $k = \ell, \ell+1, \ldots, L$, we can show that each $\frac{\partial h^{(k)}}{\partial w}$ satisfies a uniform bound independent of $\varepsilon$. The base case $k = \ell$ follows from the analysis above, and the inductive step follows by applying the same reasoning to the composition structure.

Combining all bounded terms in the expression for $v^{(\ell)}_\varepsilon(w,t)$:
\begin{align}
\|v^{(\ell)}_\varepsilon(w,t)\| &\leq \left|\int \partial_1\ell(\cdot) \sum_{k=\ell}^L \left\|\frac{\partial h^{(L)}}{\partial h^{(k)}}\right\|_{\text{op}} \left\|\frac{\partial h^{(k)}}{\partial w}\right\| \, d\pi\right|\\
&\leq L_1 \prod_{k=\ell}^L C_k^{\text{Lip}} \cdot \max_{k=\ell,\ldots,L} C_k^{\text{grad}} \\
&=: C_{\text{uniform}}
\end{align}
where:
\begin{itemize}
\item $L_1$ comes from Assumption \ref{ass:reg}(R2) bounding the loss derivative
\item $C_k^{\text{Lip}}$ are the Lipschitz constants of the activations from Assumption \ref{ass:reg}(R3)  
\item $C_k^{\text{grad}}$ are the uniform bounds on $\left\|\frac{\partial h^{(k)}}{\partial w}\right\|$ established above
\end{itemize}

Since each of these constants is independent of $\varepsilon \in (0,1]$, we conclude that $C_{\text{uniform}}$ is independent of $\varepsilon$.

The uniform bound  implies equicontinuity in the Wasserstein metric:
\[
W_2(\mu^{(\ell)}_\varepsilon(t), \mu^{(\ell)}_\varepsilon(s)) \leq C_{\text{uniform}} |t-s|
\]
for all $\varepsilon \in (0,1]$.

By constraint preservation (Lemma \ref{lem:constraint-preserve}) and boundedness assumptions (Assumption \ref{ass:reg}(R4)), all measures $\mu^{(\ell)}_\varepsilon(t)$ have support in the compact set $\mathcal{K}$ and satisfy:
\[
\sup_{\varepsilon,t} \int \|w\|^2 \, d\mu^{(\ell)}_\varepsilon(w,t) \leq M^2.
\]

By the Arzelà-Ascoli theorem in $C([0,T], \mathcal{P}_2(\mathcal{K}))$, there exists a subsequence $\varepsilon_k \downarrow 0$ and a limit $\boldsymbol{\mu}_0$ such that:
\[
\boldsymbol{\mu}_{\varepsilon_k} \to \boldsymbol{\mu}_0 \quad \text{weakly in } C([0,T], \prod_{\ell=1}^L \mathcal{P}_2(\mathbb{R}^{m_\ell})).
\]

We now prove that the limit velocity field $v^{(\ell)}_0$ corresponds to the distributional gradient of the non-smoothed risk functional $\mathcal{R}_0$. The key is to show that integrals involving $\mathrm{sgn}'_{\varepsilon_k}$ converge to the appropriate distributional limit.

\textbf{Claim:} For any test function $\varphi \in C_c^1(\mathbb{R}^{m_\ell})$ and any $(i,j) \in \{1,\ldots,n_\ell\} \times \{1,\ldots,m_\ell\}$:
\begin{align}
\lim_{k \to \infty} \int_{\mathcal{K}} \varphi(w) \mathrm{sgn}'_{\varepsilon_k}(P^{(\ell)}(w)_{ij}) \, d\mu^{(\ell)}_{\varepsilon_k}(w,t) = 2 \int_{\{P^{(\ell)}(w)_{ij} = 0\}} \varphi(w) \, d\mu^{(\ell)}_0(w,t).
\end{align}

\textbf{Proof of Claim:} Let $\delta > 0$ be arbitrary. We decompose the integration domain as:
\begin{align}
\mathcal{K} &= \mathcal{K}_\delta^+ \cup \mathcal{K}_\delta^0 \cup \mathcal{K}_\delta^-,
\end{align}
where:
\begin{align}
\mathcal{K}_\delta^+ &:= \{w \in \mathcal{K} : P^{(\ell)}(w)_{ij} > \delta\}, \\
\mathcal{K}_\delta^0 &:= \{w \in \mathcal{K} : |P^{(\ell)}(w)_{ij}| \leq \delta\}, \\
\mathcal{K}_\delta^- &:= \{w \in \mathcal{K} : P^{(\ell)}(w)_{ij} < -\delta\}.
\end{align}

For $w \in \mathcal{K}_\delta^+$, we have $P^{(\ell)}(w)_{ij} > \delta$, so by Lemma \ref{lem:tanh_uniform}(i):
\begin{align}
\mathrm{sgn}'_{\varepsilon_k}(P^{(\ell)}(w)_{ij}) \leq \frac{4}{\varepsilon_k} e^{-2\delta/\varepsilon_k}.
\end{align}

Since $\varphi$ is compactly supported with $\|\varphi\|_\infty \leq C_\varphi$ for some constant $C_\varphi$:
\begin{align}
\left|\int_{\mathcal{K}_\delta^+} \varphi(w) \mathrm{sgn}'_{\varepsilon_k}(P^{(\ell)}(w)_{ij}) \, d\mu^{(\ell)}_{\varepsilon_k}(w,t)\right| &\leq C_\varphi \cdot \frac{4}{\varepsilon_k} e^{-2\delta/\varepsilon_k} \cdot \mu^{(\ell)}_{\varepsilon_k}(\mathcal{K}_\delta^+,t) \\
&\leq \frac{4C_\varphi}{\varepsilon_k} e^{-2\delta/\varepsilon_k}.
\end{align}

As $k \to \infty$ (i.e., $\varepsilon_k \downarrow 0$), we have $\frac{1}{\varepsilon_k} e^{-2\delta/\varepsilon_k} \to 0$ exponentially fast. Similarly for $\mathcal{K}_\delta^-$.

Therefore:
\begin{align}
\lim_{k \to \infty} \int_{\mathcal{K}_\delta^+ \cup \mathcal{K}_\delta^-} \varphi(w) \mathrm{sgn}'_{\varepsilon_k}(P^{(\ell)}(w)_{ij}) \, d\mu^{(\ell)}_{\varepsilon_k}(w,t) = 0.
\end{align}

On $\mathcal{K}_\delta^0$, we have $|P^{(\ell)}(w)_{ij}| \leq \delta$. We use the change of variables $z = P^{(\ell)}(w)_{ij}$ and define the pushforward measure:
\begin{align}
\nu_k^\delta := (P^{(\ell)}(\cdot)_{ij})_{\#}(\mu^{(\ell)}_{\varepsilon_k}|_{\mathcal{K}_\delta^0}).
\end{align}

Then:
\begin{align}
&\int_{\mathcal{K}_\delta^0} \varphi(w) \mathrm{sgn}'_{\varepsilon_k}(P^{(\ell)}(w)_{ij}) \, d\mu^{(\ell)}_{\varepsilon_k}(w,t) \\
&= \int_{[-\delta,\delta]} \widetilde{\varphi}_{\varepsilon_k}(z) \mathrm{sgn}'_{\varepsilon_k}(z) \, d\nu_k^\delta(z),
\end{align}
where $\widetilde{\varphi}_{\varepsilon_k}(z)$ is the conditional expectation of $\varphi(w)$ given $P^{(\ell)}(w)_{ij} = z$ and $w \in \mathcal{K}_\delta^0$.

By weak convergence $\mu^{(\ell)}_{\varepsilon_k} \rightharpoonup \mu^{(\ell)}_0$, we have $\nu_k^\delta \rightharpoonup \nu_0^\delta$ where $\nu_0^\delta := (P^{(\ell)}(\cdot)_{ij})_{\#}(\mu^{(\ell)}_0|_{\mathcal{K}_\delta^0})$.

Since $\widetilde{\varphi}_{\varepsilon_k}(z) \to \widetilde{\varphi}_0(z)$ boundedly (by compactness), and by Lemma \ref{lem:dirac_convergence}:
\begin{align}
\lim_{k \to \infty} \int_{[-\delta,\delta]} \widetilde{\varphi}_{\varepsilon_k}(z) \mathrm{sgn}'_{\varepsilon_k}(z) \, d\nu_k^\delta(z) &= \int_{[-\delta,\delta]} \widetilde{\varphi}_0(z) \cdot 2\delta_0(z) \, d\nu_0^\delta(z) \\
&= 2\widetilde{\varphi}_0(0) \nu_0^\delta(\{0\}).
\end{align}

Now we analyze what happens as $\delta \downarrow 0$. We have:
\begin{align}
2\widetilde{\varphi}_0(0) \nu_0^\delta(\{0\}) &= 2\widetilde{\varphi}_0(0) \cdot \mu^{(\ell)}_0(\{w \in \mathcal{K}_\delta^0 : P^{(\ell)}(w)_{ij} = 0\}) \\
&\to 2\widetilde{\varphi}_0(0) \cdot \mu^{(\ell)}_0(\{w \in \mathcal{K} : P^{(\ell)}(w)_{ij} = 0\})
\end{align}
as $\delta \downarrow 0$.

Since $\widetilde{\varphi}_0(0)$ is the conditional expectation of $\varphi(w)$ given $P^{(\ell)}(w)_{ij} = 0$:
\begin{align}
2\widetilde{\varphi}_0(0) \cdot \mu^{(\ell)}_0(\{w : P^{(\ell)}(w)_{ij} = 0\}) &= 2 \int_{\{P^{(\ell)}(w)_{ij} = 0\}} \varphi(w) \, d\mu^{(\ell)}_0(w,t).
\end{align}

Combining and taking $\delta \downarrow 0$:
\begin{align}
\lim_{k \to \infty} \int_{\mathcal{K}} \varphi(w) \mathrm{sgn}'_{\varepsilon_k}(P^{(\ell)}(w)_{ij}) \, d\mu^{(\ell)}_{\varepsilon_k}(w,t) = 2 \int_{\{P^{(\ell)}(w)_{ij} = 0\}} \varphi(w) \, d\mu^{(\ell)}_0(w,t).
\end{align}

This shows that the limit velocity field $v^{(\ell)}_0$ corresponds to the distributional gradient of the non-smoothed risk functional $\mathcal{R}_0$, where the derivative of the sign function is replaced by twice the Dirac delta at zero.

For any test function $\varphi \in C_c^1(\mathbb{R}^{m_\ell})$ and $0 \leq s < t \leq T$, the uniform bounds from Step 1 allow us to pass to the limit in:
\begin{align}
&\int \varphi \, d\mu^{(\ell)}_0(t) - \int \varphi \, d\mu^{(\ell)}_0(s) \\
&= \lim_{k \to \infty} \left(-\int_s^t \int \nabla\varphi(w) \cdot v^{(\ell)}_{\varepsilon_k}(w,r) \, d\mu^{(\ell)}_{\varepsilon_k}(w,r) \, dr\right) \\
&= -\int_s^t \int \nabla\varphi(w) \cdot v^{(\ell)}_0(w,r) \, d\mu^{(\ell)}_0(w,r) \, dr.
\end{align}

Differentiating with respect to $t$ yields the weak formulation of \eqref{eq:limit_transport}.

The zero-mean constraint is preserved in the limit since for any $t \in [0,T]$:
\[
\int \alpha^{(\ell)}(w) \, d\mu^{(\ell)}_0(w,t) = \lim_{k \to \infty} \int \alpha^{(\ell)}(w) \, d\mu^{(\ell)}_{\varepsilon_k}(w,t) = \text{const.}
\]
by Lemma \ref{lem:constraint-preserve} and weak convergence of measures.
\end{proof}

\begin{remark}[Connection to Straight-Through Estimators]
Theorem \ref{thm:stability_ste} provides a rigorous foundation for the straight-through estimator (STE) commonly used in BitNet training. The limiting dynamics correspond to gradient descent on the non-smoothed risk functional, where the singular gradients of the sign function are naturally regularized by the exponential decay inherent in the tanh smoothing. This validates the STE approximation as the mathematically correct limit of smooth quantization.
\end{remark}

\section{Conclusion}

This work presents the first rigorous mean-field analysis of deep BitNet-like architectures under smooth quantization. By introducing differentiable surrogates for the sign and clipping functions, we establish well-posedness of the training dynamics in the space of probability measures and prove convergence of the empirical weight distributions to solutions of constrained transport equations as the smoothing parameter $\varepsilon\to 0$. Our key technical insight is that the natural exponential decay in the derivatives of $\tanh(z/\varepsilon)$ perfectly offsets the singular $\varepsilon^{-1}$ scaling, yielding uniform bounds on the velocity fields without requiring additional measure concentration arguments. Consequently, we rigorously justify the straight-through estimator as the correct limiting gradient flow for quantized networks. Future work includes extending this framework to true hard quantizers via differential inclusions and relaxing compactness assumptions on the weight domain.

\section*{Acknowledgments}

The author thanks the maintainers of BitNet for inspiring the latent--quantized duality viewpoint.

\bibliographystyle{plainnat}
\bibliography{refers}

\appendix

\section{Empirical measures and mean-field limit}

\subsection{Row-wise empirical measures and constraint preservation}

Recall from Definition \ref{def:architecture} that each layer $\ell \in \{1,\ldots,L\}$ has weight matrix $W^{(\ell)} \in \mathbb{R}^{n_\ell \times m_\ell}$. For analysis purposes, we index the rows of $W^{(\ell)}$ by $i = 1, \ldots, n_\ell$ and denote the $i$-th row as $w^{(\ell)}_i \in \mathbb{R}^{m_\ell}$. Thus:

\[
W^{(\ell)} = \begin{pmatrix}
(w^{(\ell)}_1)^T \\
\vdots \\
(w^{(\ell)}_{n_\ell})^T
\end{pmatrix} \in \mathbb{R}^{n_\ell \times m_\ell}.
\]

At discrete time $k$, define the empirical measure on $\mathbb{R}^{m_\ell}$:

\begin{equation}\label{eq:emp-measure}
\widehat\mu^{(\ell)}_{n_\ell}(k) := \frac{1}{n_\ell}\sum_{i=1}^{n_\ell}\delta_{w^{(\ell)}_i(k)},
\end{equation}

where $\delta_x$ denotes the Dirac measure at point $x \in \mathbb{R}^{m_\ell}$. For continuous-time analysis with interpolation $t = k\eta$, we define:

\[
\widehat\mu^{(\ell)}_{n_\ell}(t) := \widehat\mu^{(\ell)}_{n_\ell}(\lfloor t/\eta \rfloor), \quad t \in [0,T].
\]

Let $\boldsymbol{\widehat\mu}_n := (\widehat\mu^{(1)}_{n_1}, \ldots, \widehat\mu^{(L)}_{n_L})$ denote the collection of empirical measures across all layers.

The layer means $\alpha^{(\ell)}(k) := \Psi^{(\ell)}(W^{(\ell)}(k))$ from \eqref{eq:alpha-def} satisfy the following evolution under gradient descent.

\begin{lemma}[Constraint preservation]\label{lem:constraint-preserve}
Let the updates be \eqref{eq:gd}. Then for all $k \geq 0$ and each $\ell$,
\[
\Psi^{(\ell)}(W^{(\ell)}(k+1)) = \Psi^{(\ell)}(W^{(\ell)}(k)) - \eta \Psi^{(\ell)}(\nabla_{W^{(\ell)}}\mathcal{R}_\varepsilon),
\]
so in continuous time with $t = k\eta$,
\[
\frac{d}{dt}\Psi^{(\ell)}(W^{(\ell)}(t)) = -\Psi^{(\ell)}(\nabla_{W^{(\ell)}}\mathcal{R}_\varepsilon).
\]
\end{lemma}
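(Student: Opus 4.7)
The proof will be essentially a one-line consequence of linearity, so my plan is to make that linearity explicit and then cleanly separate the discrete-time and continuous-time claims.

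First, I would rewrite the layer-mean functional in inner-product form: by \eqref{eq:alpha-def} and the definition of the Frobenius inner product, $\Psi^{(\ell)}(A)=\frac{1}{n_\ell m_\ell}\langle \one_{n_\ell\times m_\ell},A\rangle$ for every $A\in\R^{n_\ell\times m_\ell}$. This makes it obvious that $\Psi^{(\ell)}:\R^{n_\ell\times m_\ell}\to\R$ is a continuous \emph{linear} functional, a fact already implicit in Lemma \ref{lem:proj}(i).

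For the discrete-time statement, I would apply $\Psi^{(\ell)}$ to both sides of the update rule \eqref{eq:gd}. Writing $G^{(\ell)}(k):=\nabla_{W^{(\ell)}}\mathcal{R}_\varepsilon(W^{(1)}(k),\dots,W^{(L)}(k))$, linearity immediately gives
\[
\Psi^{(\ell)}(W^{(\ell)}(k+1))=\Psi^{(\ell)}\bigl(W^{(\ell)}(k)-\eta\,G^{(\ell)}(k)\bigr)=\Psi^{(\ell)}(W^{(\ell)}(k))-\eta\,\Psi^{(\ell)}(G^{(\ell)}(k)),
\]
which is the first displayed equation in the lemma.

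For the continuous-time statement, I would invoke Lemma \ref{lem:lipschitz-forward} to guarantee that $\nabla\mathcal{R}_\varepsilon$ is locally Lipschitz on the compact domain $K$, so the gradient flow $\dot W^{(\ell)}(t)=-\nabla_{W^{(\ell)}}\mathcal{R}_\varepsilon(W(t))$ admits a unique $C^1$ solution on $[0,T]$ and is the $\eta\downarrow0$ limit of the time-interpolated iterates. Then I would obtain the ODE in two equivalent ways and present whichever is shorter: either (a) divide the discrete identity by $\eta$ and pass to the limit, or (b) use the chain rule with $\nabla\Psi^{(\ell)}\equiv(n_\ell m_\ell)^{-1}\one$, giving
\[
\frac{d}{dt}\Psi^{(\ell)}(W^{(\ell)}(t))=\bigl\langle\nabla\Psi^{(\ell)},\dot W^{(\ell)}(t)\bigr\rangle=-\Psi^{(\ell)}\bigl(\nabla_{W^{(\ell)}}\mathcal{R}_\varepsilon(W(t))\bigr).
\]

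There is no real analytic obstacle here; the whole content is that $\Psi^{(\ell)}$ is linear and commutes with both the finite difference and the time derivative. The only point that deserves a sentence of care is invoking Lemma \ref{lem:lipschitz-forward} to justify the continuous-time limit, which also ensures that $t\mapsto\Psi^{(\ell)}(W^{(\ell)}(t))$ is differentiable so that writing $\frac{d}{dt}$ is legitimate.
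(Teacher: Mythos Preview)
Your proposal is correct and follows essentially the same approach as the paper: apply linearity of $\Psi^{(\ell)}$ to the gradient-descent update for the discrete identity, then divide by $\eta$ and pass to the limit for the continuous-time ODE. Your version is simply more explicit about why the limit is justified (invoking Lemma~\ref{lem:lipschitz-forward}) and offers the chain-rule variant as an alternative, whereas the paper compresses the whole argument into two sentences.
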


\begin{proof}
By linearity of $\Psi^{(\ell)}$ and the gradient descent update \eqref{eq:gd},
\[
\Psi^{(\ell)}(W^{(\ell)}(k+1)) = \Psi^{(\ell)}(W^{(\ell)}(k)) - \eta \Psi^{(\ell)}(\nabla_{W^{(\ell)}}\mathcal{R}_\varepsilon).
\]
Dividing by $\eta$ and passing to the limit $\eta \downarrow 0$ yields the differential form.
\end{proof}

\subsection{Functional derivatives and velocity fields}

For a collection of probability measures $\boldsymbol{\mu} = (\mu^{(1)}, \ldots, \mu^{(L)})$ on $\mathbb{R}^{m_1} \times \cdots \times \mathbb{R}^{m_L}$, define the population risk functional:

\[
\mathcal{R}_\varepsilon[\boldsymbol{\mu}] := \mathbb{E}_{(X,Y) \sim \pi}\left[\ell(f_{\boldsymbol{\mu}}(X), Y)\right],
\]

where $f_{\boldsymbol{\mu}}(x)$ represents the network output when layer weights are sampled according to the measures $\boldsymbol{\mu}$.

The \emph{functional derivative} $\mathcal{R}^{(\ell)}_\varepsilon[\boldsymbol{\mu}]: \mathbb{R}^{m_\ell} \to \mathbb{R}$ is defined as the Gateaux derivative with respect to perturbations in $\mu^{(\ell)}$:

\[
\mathcal{R}^{(\ell)}_\varepsilon[\boldsymbol{\mu}](w) := \lim_{\tau \to 0} \frac{1}{\tau}\left(\mathcal{R}_\varepsilon[\boldsymbol{\mu} + \tau(\delta_w - \mu^{(\ell)})] - \mathcal{R}_\varepsilon[\boldsymbol{\mu}]\right).
\]

Define the velocity fields $v^{(\ell)}: \mathbb{R}^{m_\ell} \times [0,T] \to \mathbb{R}^{m_\ell}$ by:

\begin{equation}\label{eq:velocity}
v^{(\ell)}(w,t) := -\nabla_w \mathcal{R}^{(\ell)}_\varepsilon[\boldsymbol{\mu}(t)](w),
\end{equation}

where $\nabla_w$ denotes the gradient with respect to the row variable $w \in \mathbb{R}^{m_\ell}$.

\subsection{Continuity equations and transport structure}

The mean-field limit is characterized by the coupled system of continuity equations:

\begin{equation}\label{eq:cont-eq}
\partial_t \mu^{(\ell)} + \nabla \cdot (\mu^{(\ell)} v^{(\ell)}) = 0, \quad \ell = 1, \ldots, L,
\end{equation}

in the sense of distributions on $\mathbb{R}^{m_\ell} \times (0,T)$.

\begin{assumption}[Regularity of velocity fields]\label{ass:velocity-reg}
There exists $L_v > 0$ such that for all $\ell$ and all $\boldsymbol{\mu}, \boldsymbol{\nu}$ with supports in a fixed compact set $\mathcal{K} \subset \mathbb{R}^{m_\ell}$ and satisfying $\int \|w\|^2 d\mu^{(j)}(w), \int \|w\|^2 d\nu^{(j)}(w) \leq M^2$ for all $j$ and some $M > 0$:

\begin{enumerate}[label=(\roman*)]
\item \textbf{Lipschitz dependence on measures:}
\[
\sup_{w \in \mathcal{K}} \|v^{(\ell)}(w; \boldsymbol{\mu}) - v^{(\ell)}(w; \boldsymbol{\nu})\| \leq L_v \sum_{j=1}^L W_1(\mu^{(j)}, \nu^{(j)}).
\]

\item \textbf{Spatial regularity:} For each fixed $\boldsymbol{\mu}$, the map $w \mapsto v^{(\ell)}(w; \boldsymbol{\mu})$ is globally Lipschitz with constant $L_v$ on $\mathcal{K}$.
\end{enumerate}
\end{assumption}

\subsection{Mean-field convergence theorem}

\begin{theorem}[Weak convergence to mean-field limit]\label{thm:mf}
Fix $\varepsilon \in (0,1]$, $b \in \mathbb{N}$, $\delta \in (0,1)$, and $T > 0$. Under Assumptions \ref{ass:reg} and \ref{ass:velocity-reg}, let $n_\ell \to \infty$ for all $\ell$ with $n_\ell/n_j \to r_{\ell j} \in (0,\infty)$ and let $\eta \downarrow 0$ with $k\eta \to t \in [0,T]$.

Then the empirical process $\boldsymbol{\widehat\mu}_n$ converges weakly in $C([0,T], \prod_{\ell=1}^L \mathcal{P}(\mathbb{R}^{m_\ell}))$ to a unique $\boldsymbol{\mu} = (\mu^{(1)}, \ldots, \mu^{(L)})$ that solves the coupled system \eqref{eq:cont-eq} with velocity fields \eqref{eq:velocity}.

Moreover, for each $\ell$ and all $\varphi \in C_c^1(\mathbb{R}^{m_\ell})$:
\[
\frac{d}{dt}\int \varphi(w) \, d\mu^{(\ell)}(t,w) = \int \nabla\varphi(w) \cdot v^{(\ell)}(w,t) \, d\mu^{(\ell)}(t,w).
\]
\end{theorem}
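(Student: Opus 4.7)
The plan is to follow the standard Sznitman-style mean-field program in four steps: (i) derive a semi-discrete evolution equation for the empirical measure; (ii) establish uniform tightness and temporal equicontinuity to extract cluster points via Arzelà--Ascoli; (iii) identify every cluster point as a weak solution of the coupled continuity system \eqref{eq:cont-eq}; and (iv) use uniqueness of that PDE to upgrade subsequential convergence to convergence of the full sequence. The inputs I will lean on are the uniform iterate bound $M_\star$ from Assumption \ref{ass:reg}(R4), the $C^1$ regularity with locally Lipschitz gradient from Lemma \ref{lem:lipschitz-forward}, the uniform velocity bound $C_{\text{uniform}}$ extracted inside the proof of Theorem \ref{thm:stability_ste}, constraint preservation (Lemma \ref{lem:constraint-preserve}), and the measure-and-space Lipschitzness of the velocity (Assumption \ref{ass:velocity-reg}).

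Concretely, for each $\varphi\in C_c^1(\R^{m_\ell})$ I will Taylor-expand the one-step increment and use \eqref{eq:gd} to write
\[
\ip{\varphi,\widehat\mu^{(\ell)}_{n_\ell}(k+1)-\widehat\mu^{(\ell)}_{n_\ell}(k)} = -\frac{\eta}{n_\ell}\sum_{i=1}^{n_\ell}\nabla\varphi(w^{(\ell)}_i(k))\cdot n_\ell\,\nabla_{w^{(\ell)}_i}\mathcal{R}_\varepsilon(W(k)) + O(\eta^2),
\]
with the remainder uniform on the compact set $K=\{\|W\|_\infty\le M_\star\}$ thanks to the local Lipschitz bound on $\nabla\mathcal{R}_\varepsilon$. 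The crucial reduction is to show that the rescaled row gradient $n_\ell\,\nabla_{w^{(\ell)}_i}\mathcal{R}_\varepsilon(W)$ coincides with $\nabla_w\mathcal{R}^{(\ell)}_\varepsilon[\boldsymbol{\widehat\mu}_n(k)](w^{(\ell)}_i)$ up to an error that is $o(1)$ uniformly in $k$ and $i$. This holds because $f_W(x)$ is row-exchangeable in each layer: both $\beta^{(\ell)}_\varepsilon$ and the product $\beta^{(\ell)}_\varepsilon\widetilde W^{(\ell)}_\varepsilon$ are symmetric functionals of the rows through the zero-mean projection $P^{(\ell)}$, so their Gateaux derivatives with respect to the row-empirical measure reproduce the normalized discrete gradient, and the self-interaction correction from differentiating a row's contribution to $P^{(\ell)}$ and $\beta^{(\ell)}_\varepsilon$ scales like $1/n_\ell$ on $K$.

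Having established the discrete evolution, tightness is immediate: all iterates live in the fixed compact $\mathcal{K}=\{\|w\|_\infty\le M_\star\}$ so each time-slice marginal is automatically tight, and the per-step particle displacement is $\eta$ times a bound controlled by $C_{\text{uniform}}$, yielding
\[
W_1\bigl(\widehat\mu^{(\ell)}_{n_\ell}(t),\widehat\mu^{(\ell)}_{n_\ell}(s)\bigr)\le C_{\text{uniform}}|t-s|+O(\eta).
\]
Arzelà--Ascoli in $C([0,T],\prod_\ell\mathcal{P}_2(\mathcal{K}))$ produces a weak cluster point $\boldsymbol{\mu}$. To identify it as a solution of \eqref{eq:cont-eq}, I telescope the one-step identity over $k\eta\to t$, substitute the propagation-of-chaos approximation from the previous paragraph to replace the discrete gradient by $v^{(\ell)}(\cdot;\boldsymbol{\widehat\mu}_n(s))$, and pass to the limit: Assumption \ref{ass:velocity-reg}(i) converts weak convergence of $\boldsymbol{\widehat\mu}_n$ into uniform convergence of $v^{(\ell)}(\cdot;\boldsymbol{\widehat\mu}_n(s))$ on $\mathcal{K}$, while (ii) ensures joint continuity of the integrand against $\nabla\varphi$. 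Preservation of the zero-mean constraint in the limit is inherited directly from Lemma \ref{lem:constraint-preserve} and the continuity of $\Psi^{(\ell)}$ against weak convergence on $\mathcal{K}$.

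For uniqueness, I would appeal to the superposition principle to realize any weak solution $\boldsymbol{\mu}$ as the law of the characteristic flow of its own velocity; Assumption \ref{ass:velocity-reg} then furnishes a Grönwall inequality for $\sum_\ell W_1(\mu^{(\ell)}(t),\mu'^{(\ell)}(t))$ with constant proportional to $L_v$, forcing coincidence of two solutions sharing the same initial datum. Uniqueness promotes the subsequential convergence to convergence of the full net. The main obstacle I anticipate is the propagation-of-chaos estimate in Step (ii): the coupling between rows through $P^{(\ell)}$, $\beta^{(\ell)}_\varepsilon$, and the nonlinear chain $\widetilde W^{(\ell)}_\varepsilon\,\mathrm{Quant}^{(b)}_\varepsilon(\cdot)$ makes it somewhat delicate to verify that the self-interaction remainder is truly $O(1/n_\ell)$ uniformly in $t\in[0,T]$; once this symmetry bookkeeping is in place, the remaining PDE and compactness steps reduce to the standard mean-field template.
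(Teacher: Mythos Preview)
Your proposal follows essentially the same four-step program as the paper's proof: compactness via Arzel\`a--Ascoli on the fixed compact $\mathcal{K}$ from (R4), limit identification by Taylor-expanding the one-step increments and passing to the limit in the discrete-to-continuous sum, and uniqueness via a Gr\"onwall inequality on $\sum_\ell W_1(\mu^{(\ell)},\nu^{(\ell)})$ under Assumption~\ref{ass:velocity-reg}. The only notable differences are that the paper uses the gradient bound $M_{\text{grad}}$ from Lemma~\ref{lem:grad-bound} rather than $C_{\text{uniform}}$ from Theorem~\ref{thm:stability_ste}, runs the Gr\"onwall argument directly on $W_1$ without invoking the superposition principle, and is in fact \emph{less} explicit than you about the $n_\ell$-rescaling / row-exchangeability identification you correctly flag as the delicate step.
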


\begin{proof}
The proof proceeds through four main steps: compactness, velocity field regularity verification, limit identification, and uniqueness.

\textbf{Compactness of empirical measures.}

By Assumption \ref{ass:reg}(R4), all row vectors $w_i^{(\ell)}(k)$ remain in the compact set $\mathcal{K} := \{w \in \mathbb{R}^{m_\ell} : \|w\|_\infty \leq M_\star\}$ for $t \in [0,T]$.

From Lemma \ref{lem:grad-bound}, there exists $M_{\text{grad}} < \infty$ such that:
\[
\|\nabla_{w_i^{(\ell)}}\mathcal{R}_\varepsilon(W(k))\| \leq M_{\text{grad}}.
\]

This yields uniformly bounded increments:
\[
\|w_i^{(\ell)}(k+1) - w_i^{(\ell)}(k)\| = \eta \|\nabla_{w_i^{(\ell)}}\mathcal{R}_\varepsilon(W(k))\| \leq \eta M_{\text{grad}}.
\]

For equicontinuity, given $\epsilon > 0$, choose $\delta = \epsilon/(2M_{\text{grad}})$ and $\eta < \epsilon/(6M_{\text{grad}})$. Then for $|t-s| < \delta$:
\[
W_1(\widehat\mu^{(\ell)}_{n_\ell}(t), \widehat\mu^{(\ell)}_{n_\ell}(s)) \leq \max_i \|w_i^{(\ell)}(\lfloor t/\eta \rfloor) - w_i^{(\ell)}(\lfloor s/\eta \rfloor)\| < \epsilon.
\]

By compactness of $\mathcal{P}(\mathcal{K})$ in the Wasserstein topology and the Arzelà-Ascoli theorem, $\{\boldsymbol{\widehat\mu}_n\}$ is relatively compact in $C([0,T], \prod_{\ell=1}^L \mathcal{P}(\mathcal{K}))$.

\textbf{Velocity field regularity.}

The functional derivative satisfies:
\[
\mathcal{R}^{(\ell)}_\varepsilon[\boldsymbol{\mu}](w) = \int \ell(f_{\boldsymbol{\mu},w}^{(\ell)}(x), y) \, d\pi(x,y),
\]
where $f_{\boldsymbol{\mu},w}^{(\ell)}(x)$ denotes the network output when layer $\ell$ has an additional infinitesimal mass at position $w$.

By Lemma \ref{lem:lipschitz-forward} and the chain rule, for $w, \tilde{w} \in \mathcal{K}$:
\[
|\mathcal{R}^{(\ell)}_\varepsilon[\boldsymbol{\mu}](w) - \mathcal{R}^{(\ell)}_\varepsilon[\boldsymbol{\mu}](\tilde{w})| \leq L_2 L_{\mathrm{fwd}} \|w - \tilde{w}\|_F.
\]

This establishes Lipschitz continuity of the functional derivative, ensuring that $v^{(\ell)}(w,t)$ exists almost everywhere with:
\[
\|v^{(\ell)}(w,t)\| \leq L_2 L_{\mathrm{fwd}} \quad \text{for a.e. } w \in \mathcal{K}.
\]

\textbf{Limit identification.}

Let $\boldsymbol{\mu}$ be any limit point of $\{\boldsymbol{\widehat\mu}_n\}$. For $\varphi \in C_c^1(\mathbb{R}^{m_\ell})$ and $0 \leq s < t \leq T$, we perform discrete integration by parts:

\begin{align}
&\int \varphi \, d\widehat\mu^{(\ell)}_{n_\ell}(t) - \int \varphi \, d\widehat\mu^{(\ell)}_{n_\ell}(s) \\
&= \frac{1}{n_\ell}\sum_{i=1}^{n_\ell} \sum_{k=\lfloor s/\eta \rfloor}^{\lfloor t/\eta \rfloor - 1} [\varphi(w_i^{(\ell)}(k+1)) - \varphi(w_i^{(\ell)}(k))].
\end{align}

By Taylor expansion and uniform bounds:
\[
\varphi(w_i^{(\ell)}(k+1)) - \varphi(w_i^{(\ell)}(k)) = \nabla\varphi(w_i^{(\ell)}(k)) \cdot (w_i^{(\ell)}(k+1) - w_i^{(\ell)}(k)) + O(\eta^2 M_{\text{grad}}^2).
\]

Substituting the gradient descent updates and taking limits:
\begin{align}
&\int \varphi \, d\mu^{(\ell)}(t) - \int \varphi \, d\mu^{(\ell)}(s) \\
&= -\int_s^t \int \nabla\varphi(w) \cdot v^{(\ell)}(w,r) \, d\mu^{(\ell)}(r,w) \, dr.
\end{align}

Differentiating with respect to $t$ yields the weak formulation of \eqref{eq:cont-eq}.

\textbf{Uniqueness.}

Let $\boldsymbol{\mu}, \boldsymbol{\nu}$ be two solutions with identical initial conditions. Define:
\[
d(t) := \sum_{\ell=1}^L W_1(\mu^{(\ell)}(t), \nu^{(\ell)}(t)).
\]

By Assumption \ref{ass:velocity-reg} and the contraction property of optimal transport:
\[
\frac{d}{dt} W_1(\mu^{(\ell)}(t), \nu^{(\ell)}(t)) \leq L_v d(t).
\]

Summing over $\ell$ and applying Grönwall's inequality with $d(0) = 0$ yields $d(t) = 0$ for all $t \in [0,T]$, establishing uniqueness.
\end{proof}

\subsection{Interacting particle system interpretation}

\begin{remark}[Connection to particle systems]\label{rem:particle-interpretation}
The empirical measures \eqref{eq:emp-measure} admit a natural interpretation in terms of interacting particle systems. Each row vector $w^{(\ell)}_i(k) \in \mathbb{R}^{m_\ell}$ can be viewed as the position of the $i$-th \emph{particle} in layer $\ell$ at time $k$.

Under this interpretation:
\begin{itemize}
\item The empirical measure $\widehat\mu^{(\ell)}_{n_\ell}(k) = \frac{1}{n_\ell}\sum_{i=1}^{n_\ell}\delta_{w^{(\ell)}_i(k)}$ represents the spatial distribution of particles in layer $\ell$.

\item The gradient descent update \eqref{eq:gd} becomes a system of interacting particles:
\[
w^{(\ell)}_i(k+1) = w^{(\ell)}_i(k) - \eta \nabla_{w^{(\ell)}_i} \mathcal{R}_\varepsilon(W(k)),
\]
where the force on particle $i$ depends on the positions of all particles across all layers.

\item The mean-field limit corresponds to the thermodynamic limit where the number of particles $n_\ell \to \infty$ while their individual influence vanishes as $1/n_\ell$.

\item The velocity field $v^{(\ell)}(w,t)$ in \eqref{eq:velocity} represents the drift experienced by a test particle at position $w$ in the mean-field environment.
\end{itemize}

This particle system perspective provides intuitive insight into the dynamics, while the measure-theoretic formulation in the preceding subsections provides the rigorous mathematical foundation for the analysis.
\end{remark}

\section{Gradients, chain rule, and bounds}

\subsection{Layerwise gradients with smooth quantization}

Let $W\mapsto f_W$ be defined with smooth quantizers. Then
\begin{equation}\label{eq:grad-decomp}
\nabla_{W^{(\ell)}}\mathcal{R}_\varepsilon = \mathbb{E}\Big[\partial_1\ell\big(f_W(X),Y\big)\cdot \sum_{k=\ell}^L \frac{\partial h^{(L)}}{\partial h^{(k)}}\cdot \frac{\partial h^{(k)}}{\partial W^{(\ell)}}\Big],
\end{equation}
with all Jacobians well-defined by the chain rule. The derivative $\partial \widetilde{W}^{(\ell)}_\varepsilon/\partial W^{(\ell)}$ exists and is bounded by $\varepsilon^{-1}$ entrywise; the derivative of $\beta^{(\ell)}_\varepsilon$ has entries
\[
\partial_{W^{(\ell)}_{ij}}\beta_\varepsilon^{(\ell)}(W^{(\ell)})=\frac{1}{n_\ell m_\ell}\frac{P^{(\ell)}(W^{(\ell)})_{ij}}{\sqrt{(P^{(\ell)}(W^{(\ell)})_{ij})^2+\varepsilon^2}},
\]
bounded by $(n_\ell m_\ell)^{-1}$.

\begin{lemma}[Gradient bound]\label{lem:grad-bound}
Under Assumption \ref{ass:reg}, there exist constants $C_\ell=C_\ell(\varepsilon,b,\delta)$ such that for all entries $(i,j)$,
\[
\abs{\partial_{W^{(\ell)}_{ij}}\mathcal{R}_\varepsilon}\ \le\ C_\ell\,\Big(1+\mathbb{E}\big[\abs{f_W(X)}\big]\Big),
\]
and $\nabla_{W^{(\ell)}}\mathcal{R}_\varepsilon$ is locally Lipschitz on the compact domain.
\end{lemma}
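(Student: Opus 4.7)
The plan is to use the chain-rule decomposition (\ref{eq:grad-decomp}) given immediately before the lemma, and to bound each factor in it on the compact box $K=\{W:\|W\|_\infty\le M_\star\}$ supplied by R4, extracting constants that may depend freely on $\varepsilon,b,\delta$ but not on the entry index. First I would bound the \emph{direct} layer-$\ell$ sensitivity $\partial h^{(\ell)}/\partial W^{(\ell)}_{ij}$ by combining: the $\varepsilon^{-1}$-Lipschitzness of $\mathrm{sgn}_\varepsilon$, the $(n_\ell m_\ell)^{-1}$ entrywise bound on $\partial\beta^{(\ell)}_\varepsilon$ from the display preceding the lemma, the pointwise bound $\|\widetilde W^{(\ell)}_\varepsilon\|_\infty\le 1$, the output bound $\|\mathrm{Quant}^{(b)}_\varepsilon\|_\infty\le Q_b$, and $\|(\sigma^{(\ell)})'\|_\infty\le C_\ell$ from R3. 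This yields a finite constant $B=B(\varepsilon,b,\delta,\{C_\ell\},M_\star,R)$. Then I would bound the \emph{backward} products $\partial h^{(L)}/\partial h^{(k)}$ for $k\ge \ell$ by telescoping the analogous per-layer Jacobian bounds, picking up at most a factor $Q_b/\gamma_\varepsilon\le Q_b/\varepsilon$ from the chain-rule derivative of $\mathrm{Quant}^{(b)}_\varepsilon$ (using R3 and the compactness of $K$ to bound intermediate activations), and absorbing everything into a constant $A=A(\varepsilon,b,\delta,\{C_\ell\},M_\star,R)$.

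Next I would insert the loss factor via R2, using $\|\partial_1\ell(f_W(X),Y)\|_2\le L_1(1+\|f_W(X)\|_2)$, substitute the three families of bounds into (\ref{eq:grad-decomp}), and apply the triangle inequality inside the expectation. All layer factors are deterministic functions of $(W,X)$ uniformly bounded on $K\times\{\|x\|_\infty\le R\}$ by R1 and R4, so they pull out of the expectation, leaving
\[
\abs{\partial_{W^{(\ell)}_{ij}}\mathcal{R}_\varepsilon}\ \le\ L_1\,A\,B\,L\,\mathbb{E}\big[1+\abs{f_W(X)}\big],
\]
which gives the stated bound with $C_\ell:=L_1\,A\,B\,L$ depending only on $\varepsilon,b,\delta$ and architectural constants, but not on $(i,j)$.

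For the local Lipschitzness of $\nabla\mathcal{R}_\varepsilon$ on $K$, I would observe that every constituent map—$\mathrm{sgn}_\varepsilon$, $|\cdot|_\varepsilon$, $\mathrm{clip}_\varepsilon$, the linear projection $P^{(\ell)}$, the activation $\sigma^{(\ell)}$ (by R3), and the loss $\ell$ (by R2)—is $C^2$ with all second derivatives uniformly bounded on the relevant compact set. Hence $W\mapsto f_W(x)$ is $C^2$ on $K$ for each $x$, with Hessian majorized uniformly in $x$ over $\{\|x\|_\infty\le R\}$. Dominated convergence, with the integrable envelope supplied by R1 and R4, then permits two differentiations under the expectation, so $\nabla^2\mathcal{R}_\varepsilon$ exists and is continuous, hence bounded, on $K$; the mean value theorem yields the claimed local Lipschitz property.

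The main obstacle is not analytical but purely bookkeeping: faithfully tracking how the $\varepsilon^{-1}$ factors from the smooth sign and the inner scaling of $\mathrm{Quant}^{(b)}_\varepsilon$ propagate through the $L$-fold chain rule, so that the constants $A,B$ remain finite for each fixed $\varepsilon\in(0,1]$. The lemma allows $C_\ell$ to blow up as $\varepsilon\downarrow 0$, so no cancellation argument (of the kind used in Lemma \ref{lem:uniform_singular}) is required here, and the estimate is essentially a product of elementary bounds.
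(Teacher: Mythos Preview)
Your proposal is correct and follows essentially the same route as the paper: apply the chain-rule decomposition \eqref{eq:grad-decomp}, bound each factor using the Lipschitz/derivative constants of the smooth quantizers together with (R1)--(R4) on the compact box $K$, and deduce local Lipschitzness from boundedness of second derivatives on compacta via dominated convergence. Your write-up is simply a more detailed expansion of the paper's terse argument, including the explicit observation (also implicit in the paper's statement $C_\ell=C_\ell(\varepsilon,b,\delta)$) that no $\varepsilon$-uniform cancellation is needed here.
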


\begin{proof}
Apply \eqref{eq:grad-decomp} and bound each factor by (R2)–(R3) together with the Lipschitz constants of the smooth quantizers on the compact set of iterates (R4). The derivative of $\mathrm{sgn}_\varepsilon$ is bounded by $\varepsilon^{-1}$, the derivative of $\beta^{(\ell)}_\varepsilon$ is bounded by $(n_\ell m_\ell)^{-1}$, and $\mathrm{Quant}^{(b)}_\varepsilon$ has bounded Jacobian for fixed $\varepsilon,b,\delta$. The expectation over compactly supported $(X,Y)$ preserves these bounds, yielding the stated inequality. Local Lipschitzness follows from boundedness of second derivatives on compacta.
\end{proof}

\end{document}